\let\hat\widehat
\let\tilde\widetilde
\newenvironment{enum}{
\begin{enumerate}
  \setlength{\itemsep}{1pt}
  \setlength{\parskip}{0pt}
  \setlength{\parsep}{0pt}
}{\end{enumerate}}
\begin{document}

\title{Risk Bounds For Mode Clustering}

\author{\name Martin Azizyan \email mazizyan@cs.cmu.edu\\
       \addr Machine Learning Department\\
       Carnegie Mellon University\\
       Pittsburgh, PA 15213, USA
       \AND
       \name Yen-Chi Chen \email yenchic@andrew.cmu.edu\\
       \addr Department of Statistics\\
       Carnegie Mellon University\\
       Pittsburgh, PA 15213, USA
       \AND
       \name Aarti Singh \email aarti@cs.cmu.edu\\
       \addr Machine Learning Department\\
       Carnegie Mellon University\\
       Pittsburgh, PA 15213, USA
       \AND
       \name Larry Wasserman \email larry@stat.cmu.edu \\
       \addr Machine Learning Department and Department of Statistics\\
       Carnegie Mellon University\\
       Pittsburgh, PA 15213, USA}

\editor{}

\maketitle

\begin{abstract}
Density mode clustering is a nonparametric 
clustering method.
The clusters are the basins of attraction of the modes of a density estimator.
We study the risk of mode-based clustering.
We show that the clustering risk over the cluster cores --- the regions where the density is high ---
is very small even in high dimensions.
And under a low noise condition,
the overall cluster risk is small even beyond the cores, in high dimensions.
\end{abstract}

\begin{keywords}
Clustering, Density Estimation, Morse Theory
\end{keywords}

\section{Introduction}

Density mode clustering is a nonparametric method
for using density estimation to find clusters
\citep{cheng1995mean,Comaniciu, Arias-Castro2013, Chacon2012}.
The basic idea is to estimate the modes of the density,
and then assign points to the modes by finding the basins of attraction
of the modes. 
See Figures \ref{fig::stylized} and
\ref{fig::meanshiftpicture}.

In this paper we 
study the risk of density mode clustering.
We define the risk in terms of how pairs of points
are clustered under the true density versus the estimated density.
We show that the cluster risk over the cluster cores ---
the high density portion of the basins ---
is exponentially small, independently of dimension.
Moreover, if a certain low noise
assumption holds
then the cluster risk outside the cluster cores is small.
The low noise assumption is similar in spirit to
the Tsyabakov low noise condition that often appears in the 
high dimensional classification literature
\citep{Audibert2007}.

It is worth expanding on this last point.
Because mode clustering requires density estimation ---
and because density estimation is difficult in high dimensions ---
one might get the impression that mode clustering will not work well
in high-dimensions.
But we show that this is not the case.
Even in high dimensions the clustering risk can be very small.
Again, the situation is analogous to classification:
poor estimates of the regression function can still lead to accurate classifiers.

There are many different types of clustering ---
$k$-means, spectral, convex, hierarchical ---
and we are not claiming that mode clustering is necessarily
superior to other clustering methods.
Indeed, which method is best is very problem specific.
Rather, our goal is simply to find bounds on 
the performance of mode base clustering.
Our analysis covers both the low and high-dimensional cases.

{\em Outline.}
In Section 2 we review mode clustering.
In Section 3 we discuss the estimation of the clusters
using kernel density estimators.
Section 4 contains the main results.
After some preliminaries,
we bound the risk over the cluster cores in Section 4.3.
In Section 4.4 we bound the risk outside the cores under a low noise assumption.
In Section 4.5 we consider the case of Gaussian clusters.
In Section 4.6 we show a different method
to bound the risk in the low dimensional case.
Section 5 contains some numerical experiments.
We conclude with a discussion in Section 6.

\vspace{1cm}

{\em Related Work.}
Mode clustering is usually implemented
using the mean-shift
algorithm which is
discussed in \cite{Fukunaga,cheng1995mean,Comaniciu}.
The algorithm is analyzed in \cite{Arias-Castro2013}.
\cite{li2007nonparametric, Azzalini:2007la} introduced mode
clustering to the statistics literature. The related idea of clustering based on
high density regions was proposed in \cite{Hartigan1975}.
\cite{Chacon2011} and \cite{Chacon2012} propose several methods for selecting the
bandwidth for estimating the derivatives of the
density estimator which can in turn be used as a bandwidth selection
rule for mode clustering.
A method that is related to mode clustering is
clustering based on trees constructed from density level sets.
See, for example, 
\cite{chaudhuri2010rates},
\cite{kpotufe2011pruning} and
\cite{kent2013debacl}.

{\em Notation:}
We let $p$ denote a density function,
$g$ its gradient and $H$ its Hessian.
A point $x$ is a {\em local mode} 
(i.e. a local maximum)
of $p$ if
$||g(x)||=0$ and all the eigenvalues
of $H(x)$ are negative.
Here, $||\cdot ||$ denotes the usual $L_2$ norm.
In general, the eigenvalues of
a symmetric matrix $A$ are denoted by
$\lambda_1\geq \lambda_2\geq \cdots$.
We write $a_n \preceq b_n$ to mean that there is some $C>0$
such that $a_n \leq C b_n$ for all large $n$.
We use $B(x,\epsilon)$ to denote a closed ball of radius
$\epsilon$ centered at $x$.
The boundary of a set $A$ is denoted by $\partial A$.

\begin{figure}
\begin{center}
\begin{tabular}{ccc}
\includegraphics[scale=.35]{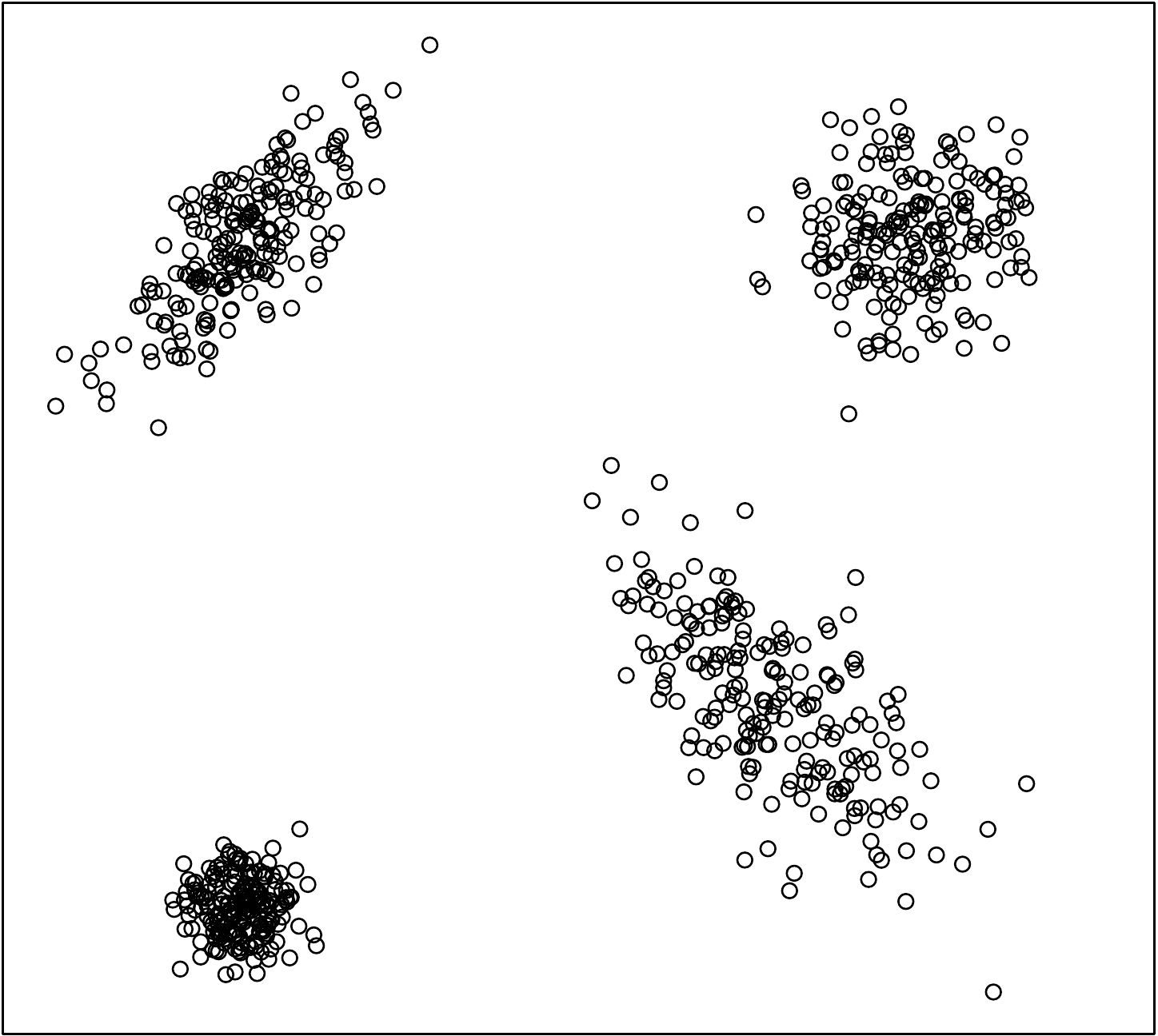} & 
\includegraphics[scale=.35]{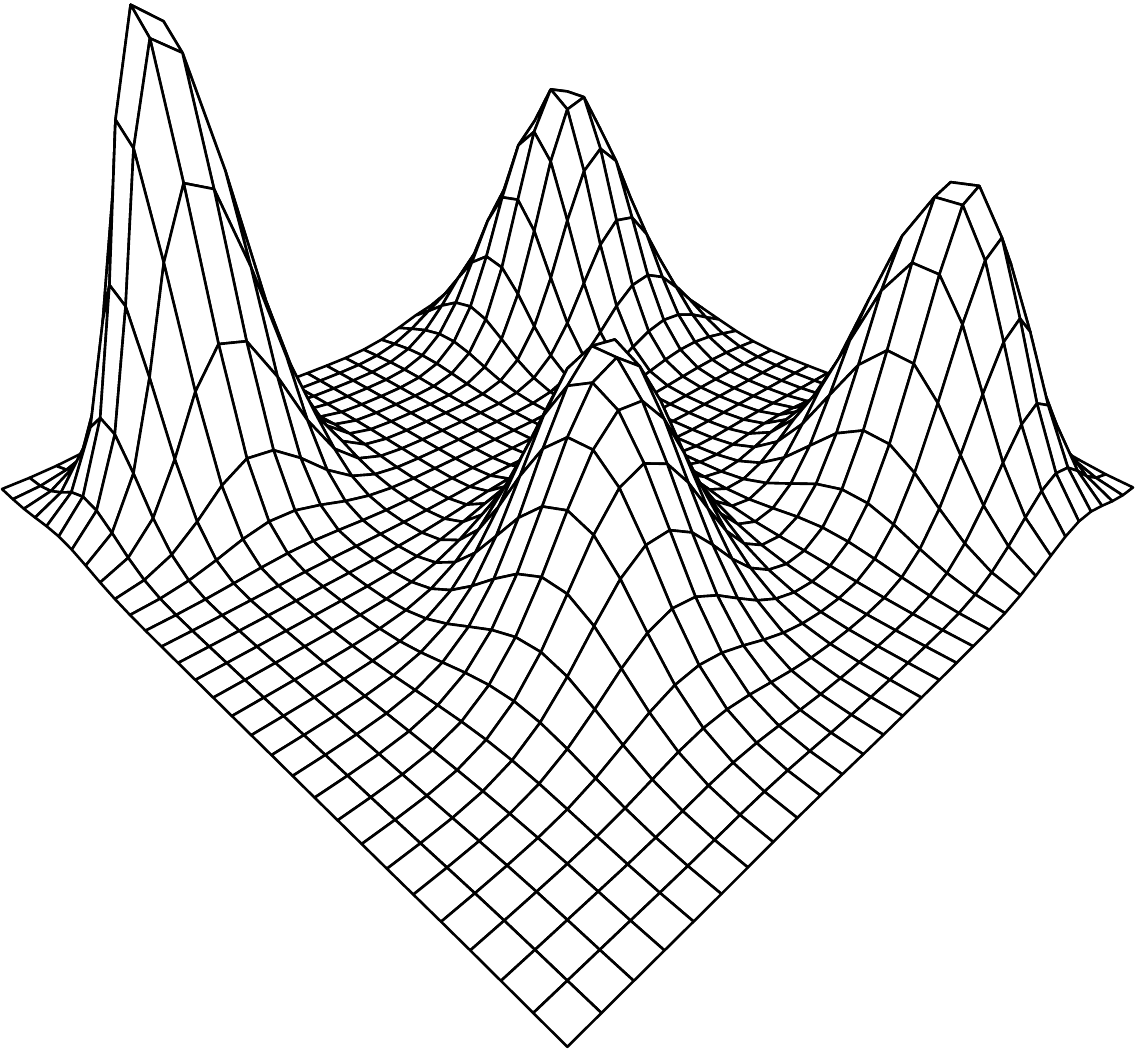} & 
\includegraphics[scale=.35]{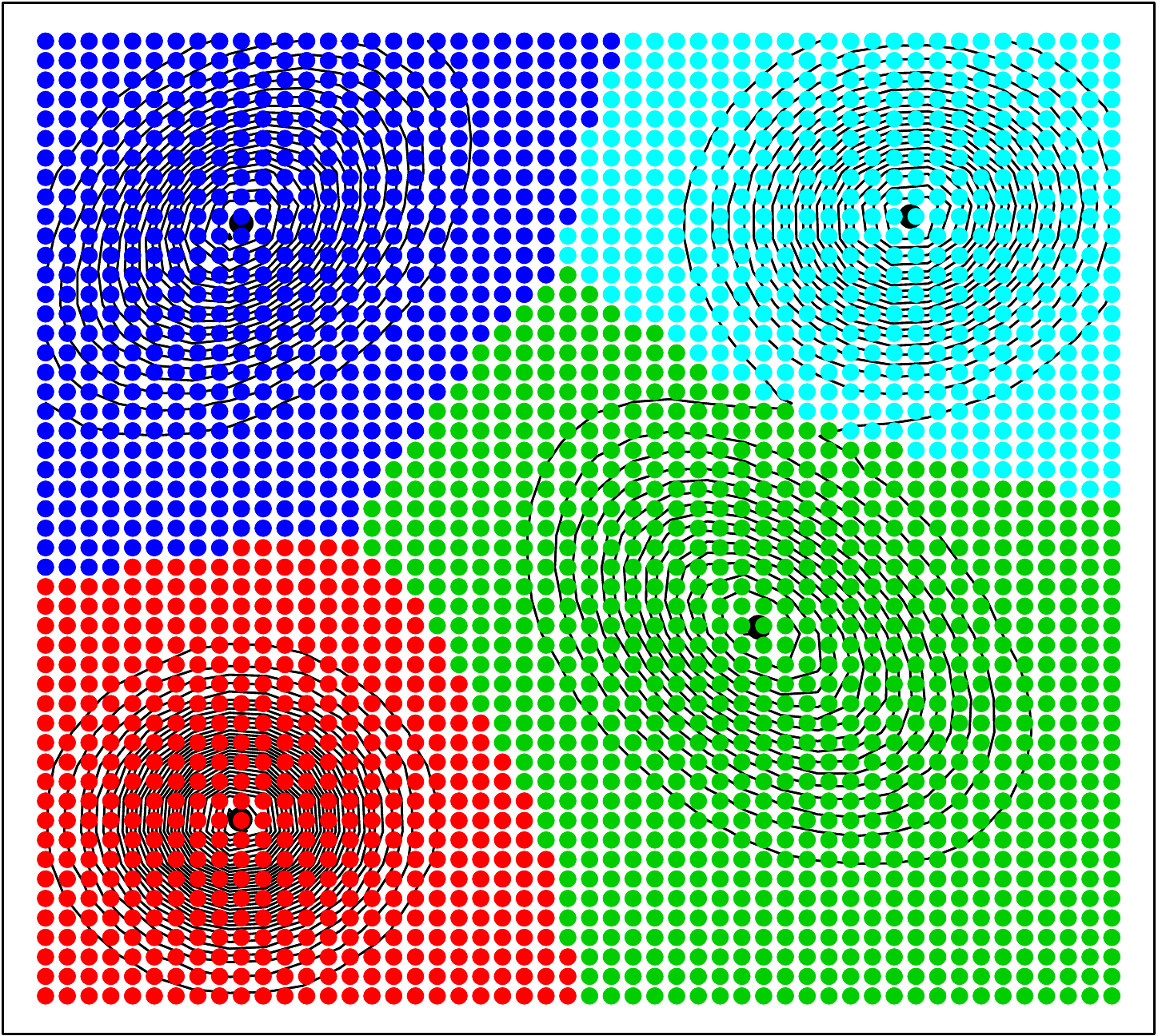} 
\end{tabular}
\end{center}
\caption{\em Left: a simple dataset.
Middle: the kernel density estimator.
Right: The four estimated modes and their basins of attractions.}
\label{fig::stylized}
\end{figure}

\section{Mode Clustering and Morse Theory}

Here we give a brief review of mode clustering, also called mean-shift clustering;
more details can be found in Cheng (1995), Comaniciu and Meer (2002),
Arias-Castro, Mason, Pelletier (2014) and  Chacon (2012).

\subsection{Morse Theory}

We will need some terminology from Morse theory.
Good references on Morse theory
include
\cite{edelsbrunner2010computational,
milnor1963morse,
matsumoto2002introduction,
banyaga2004morse}.

Let $p$ be a bounded continuous density on $\mathbb{R}^d$
with gradient $g$ and Hessian $H$.
A point $x$ is a {\em critical point} if
$||g(x)||=0$.
We then call $p(x)$ a {\em critical value}.
A point that is not a critical point is a {\em regular point}.

The function $p$ is
a {\em Morse function}
if all its critical values are non-degenerate
(i.e. the Hessian at each critical point is non-singular).
A critical point $x$ is a mode, or local maximum,
if the Hessian $H(x)$ is negative definite at $x$.
The {\em index} of a critical point $x$
is the number of negative eigenvalues of $H(x)$.
Critical points are maxima, minima or saddlepoints.

The {\em flow}
starting at $x$ is the path
$\pi_x: \mathbb{R}\to \mathbb{R}^d$
satisfying $\pi_x(0) =x$ and
\begin{equation}\label{eq::flow}
\pi_x'(t) = \nabla p(\pi_x(t)).
\end{equation}
The flow $\pi_x(t)$ defines the direction of
steepest ascent at $x$.
The {\em destination} and {\em origin} of the flow $\pi_x$ are defined by
\begin{equation}
{\rm dest}(x) = \lim_{t\to\infty}\pi_x(t),\ \ \ 
{\rm org}(x) = \lim_{t\to -\infty}\pi_t(x).
\end{equation}
If $x$ is a critical point, then ${\rm dest}(x)=x$.

The {\em stable manifold} corresponding to a critical point $y$---
also called
the {\em descending manifold} or the {\em basin of attraction}---
is
\begin{equation}
{\cal C}(y) = \Bigl\{x:\ {\rm dest}(x)=y \Bigr\}.
\end{equation}
In particular,
the basin of attraction of a mode $m$ is called a {\em cluster}.
See Figures
\ref{fig::DM} and \ref{Fig::ex_D}.

\begin{figure}
\begin{center}
\includegraphics[scale=.75]{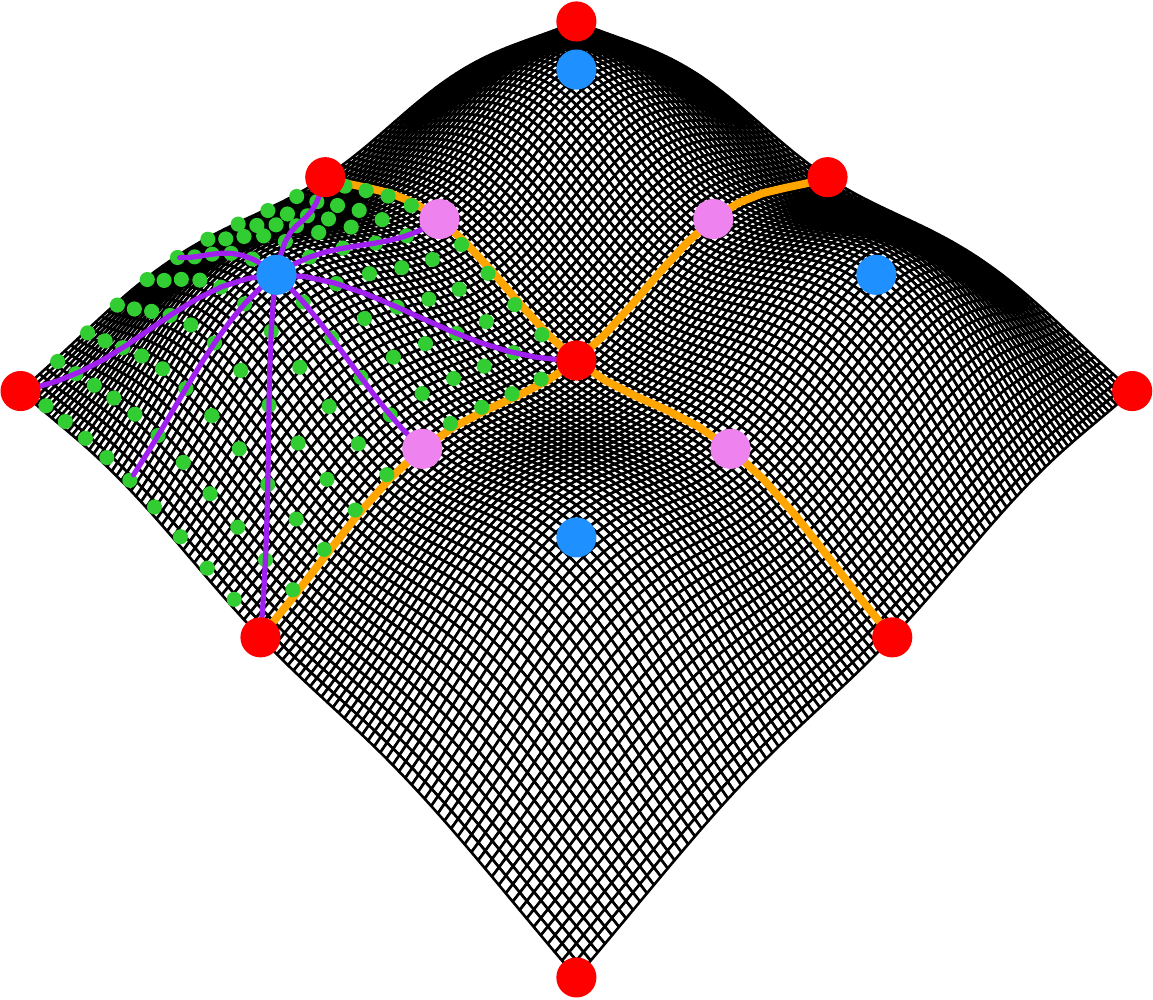}
\end{center}
\caption{\em A Morse function with four modes.
Each solid blue dot is a mode.
Each red dot is a minimum.
Pink dots denote saddle points.
The green area is the descending manifold (cluster) for one of the modes.}
\label{fig::DM}
\end{figure}



\begin{figure}
\center
\includegraphics{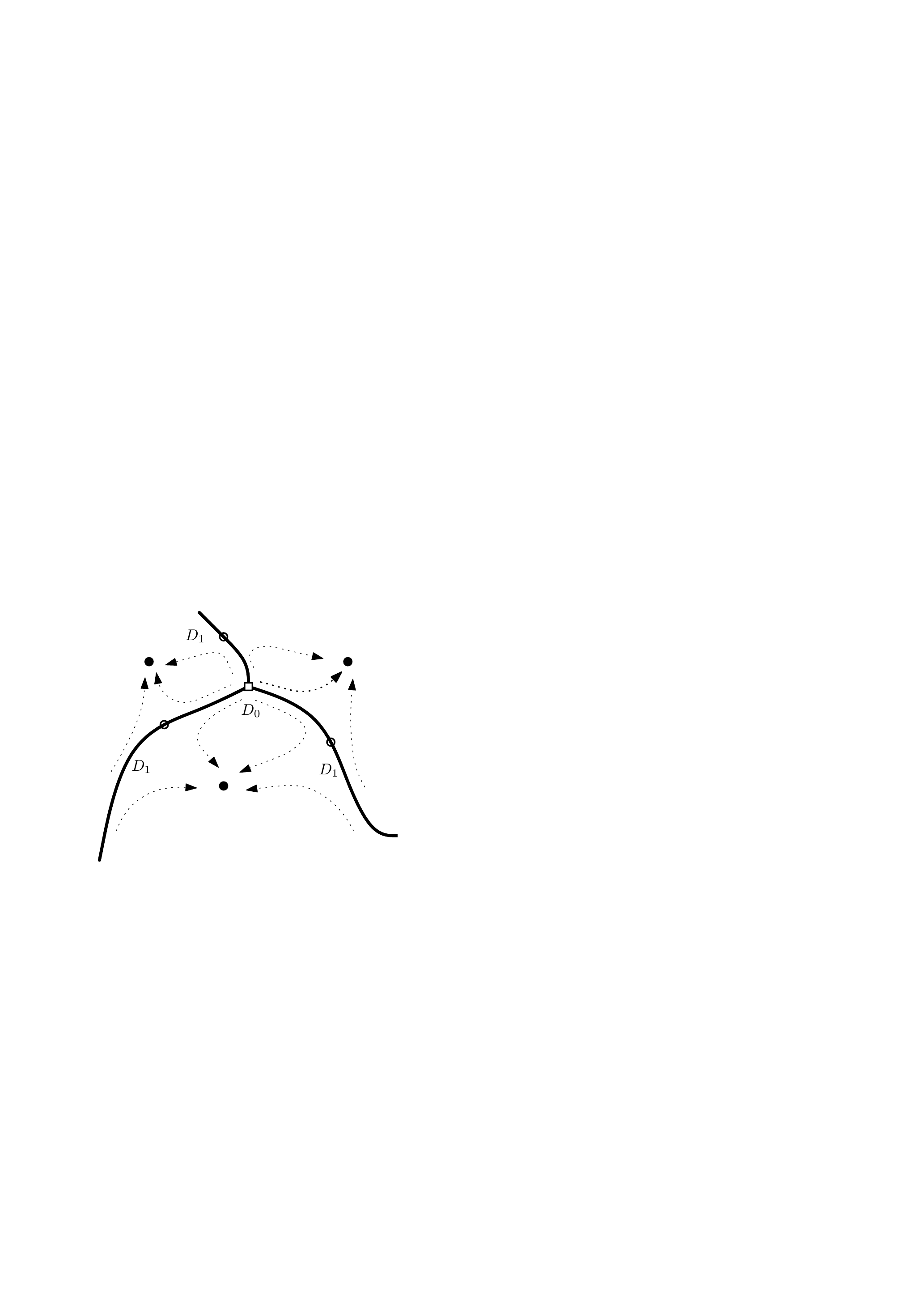}
\caption{\em The three large black dots are the three local modes
that induce three clusters based on the corresponding basins of attraction.
The cluster boundaries, $D$, consists of the local minima (the square box, $D_0$) and 
the three thick smooth curves are $D_1$. The circles on the boundaries are saddle points.
The dotted lines show the flow lines.}
\label{Fig::ex_D}
\end{figure}

Let us mention a few properties of Morse functions that are useful:
\begin{enum}
\item Excluding critical points, two flow lines are either disjoint or they are the same.
\item The origin and destination of a flow line
are critical points (except at boundaries of clusters).
The set of points $x$ whose destinations are
not modes are on the boundaries of clusters and
form a set of measure 0.
\item Flow lines are monotonic: 
$p(x_t)$ is a non-decreasing function of $t$,
where $x_t = \pi_x(t)$.
Further,
$p({\rm dest}(x)) \geq p({\rm org}(x))$ and
${\rm dest}(x) \neq {\rm org}(x)$ if $x$ is a regular point.
\item The index of 
${\rm dest}(x)$ is greater than the index of ${\rm org}(x)$.
\item The flow has the semi-group property:
$\phi(x,t+s) = \phi(\phi(x,t),s)$
where
$\phi(x,t) = \pi_x(t)$.
\item 
Let ${\cal C}$ be the basin of attraction of a mode $m$.
If $y$ is a critical point in the closure of ${\cal C}$ and $y\neq m$,
then $y\in \partial {\cal C}$.
\end{enum}

\subsection{Clusters}

Consider a distribution $P$ on ${\cal K}\subset \mathbb{R}^d$ with density $p$.
We assume that $p$ is a Morse function with finitely many critical points.
The modes of $p$ are denoted by
\begin{equation}
{\cal M} = \{m_1,\ldots, m_k\}
\end{equation}
The corresponding clusters are
${\cal C}_1, \ldots, {\cal C}_k$
where
${\cal C}_j = \Bigl\{x:\ {\rm dest}(x)=m_j\Bigr\}$.
Define the clustering function
$c:{\cal K} \times {\cal K}\to \{0,1\}$ by
$$
c(x,y) =
\begin{cases}
1 & {\rm if\ } {\rm dest}(x) = {\rm dest}(y)\\
0 & {\rm if\ } {\rm dest}(x) \neq {\rm dest}(y).
\end{cases}
$$
Thus, 
$c(x,y)=1$ if and only if $x$ and $y$ are in the same cluster.

Let $X_1,\ldots, X_n \in \mathbb{R}^d$
be random vectors
drawn iid from $P$.
Let $\hat p$ be an estimate of the density $p$
with corresponding estimated modes
$\hat{\cal M}= \{\hat m_1,\ldots, \hat m_\ell\}$,
and basins
$\hat{\cal C} = \{ \hat {\cal C}_1,\ldots, \hat {\cal C}_\ell\}$.
This defines a cluster function $\hat c$.

In this paper,
the {\em pairwise clustering loss} is defined to be
\begin{equation}
L = \frac{1}{\binom{n}{2}}\sum_{j < k} I \Bigl( \hat c(X_j,X_k)\neq c(X_j,X_k)\Bigr)
\end{equation}
which is one minus the Rand index.
The corresponding clustering risk is
$R = \mathbb{E}[L]$.

\section{Estimated Clusters}

Estimating the clusters involves
two steps. First we estimate the density
then we estimate the modes and their basins of attractions.
To estimate the density we use the standard kernel density estimator
\begin{equation}
\hat p_h(x) =\frac{1}{n}\sum_{i=1}^n \frac{1}{h^d}
K\left( \frac{||x-X_i||}{h}\right).
\end{equation}
We will need the following result on 
the accuracy of derivative estimation.
We state the result without proof as it is
a simple generalization of the result in
\cite{Gine2002}
which is based on Talagrand's inequality.
In fact, it is essentially
a different way of stating the results of Lemmas 2 and 3 in
\cite{Arias-Castro2013}.

\begin{lemma}\label{lemma::kernel}
Let $p_h(x) = \mathbb{E}[\hat p_h(x)]$.
Assume that the kernel is Gaussian.
Also assume that $p$ has bounded continuous derivatives up to and including third order.
Then:

(1: Bias) There exist $c_0,c_1,c_2$ such that
$$
\sup_x |p_h(x) - p(x)| \leq c_0 h^2,\ \ \ 
\sup_x ||\nabla p_h(x) - \nabla p(x)|| \leq c_1 h^2,\ \ \ 
\sup_x ||\nabla^2 p_h(x) - \nabla^2 p(x)|| \leq c_2 h.
$$

(2: Variance) There exist $b,b_0, b_1, b_2$ such that, 
if $(\log n/n)^{1/d} \leq h \leq b$ where $b < 1$,
then, 
\begin{align*}
\mathbb{P}(\sup_x |\hat p_h(x) - p_h(x)|> \epsilon) & \leq e^{-b_0 n h^d \epsilon^2}\\
\mathbb{P}(\sup_x ||\hat \nabla p_h(x) - \nabla p_h(x)||> \epsilon) & \leq 
e^{-b_1 n h^{d+2}\epsilon^2}\\
\mathbb{P}(\sup_x ||\hat \nabla^2 p_h(x) - \nabla^2 p_h(x)||> \epsilon) & \leq 
e^{-b_2 n h^{d+4}\epsilon^2}.
\end{align*}
\end{lemma}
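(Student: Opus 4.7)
The plan is to handle the bias and variance parts separately by the standard empirical-process approach to kernel density derivative estimation, leveraging the smoothness of the Gaussian kernel and symmetry of its moments.

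For the bias bounds, I would write
\[
p_h(x) - p(x) = \int K(u)\,\bigl[p(x-hu) - p(x)\bigr]\,du
\]
and Taylor-expand $p$ around $x$. Since the Gaussian kernel is even, all odd moments of $K$ vanish, so the first-order term drops out and the leading surviving contribution is of order $h^2\,\|\nabla^2 p\|_\infty$, giving $c_0$. Differentiating under the integral yields $\nabla p_h(x) - \nabla p(x) = \int K(u)\,[\nabla p(x-hu) - \nabla p(x)]\,du$, and the same parity argument combined with the assumed boundedness of $\nabla^3 p$ produces the $c_1 h^2$ bound. For the Hessian we have only three bounded derivatives available, so the Taylor remainder of $\nabla^2 p(x-hu) - \nabla^2 p(x)$ is genuinely first order in $h$; the $c_2 h$ bound then follows from a straightforward integration against $K$.

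For the variance bounds, I would apply Talagrand's inequality (in the form used by \cite{Gine2002}) to each of the three function classes
\[
\mathcal{F}_j = \bigl\{\, y \mapsto h^{-(d+j)} (D^j K)\bigl((x-y)/h\bigr) \ :\ x \in \R^d \,\bigr\}, \quad j=0,1,2,
\]
consisting of the kernel and its first two derivatives translated by $x$. Each $\mathcal{F}_j$ is a VC-subgraph class of fixed VC dimension (the Gaussian and its polynomial derivatives form a standard parametric family of bounded complexity, as in the Nolan--Pollard type arguments used in \cite{Gine2002}). The envelope of $\mathcal{F}_j$ is $O(h^{-(d+j)})$, and a short computation using $\E[(D^j K_h(x-X))^2] = h^{-(d+2j)} \int (D^j K(u))^2 p(x-hu)\,du$ shows the variance of a typical element is $O(h^{-(d+2j)})$. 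Plugging these quantities into Talagrand's bound produces tail probabilities of the form $\exp(-c_j\, n h^{d+2j} \epsilon^2)$, which are exactly the three displayed inequalities.

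The main technical obstacle is the usual one: one must verify that under the condition $h \geq (\log n/n)^{1/d}$ (and $h \leq b<1$), the variance term in the Bernstein/Talagrand bound dominates the sub-exponential term driven by the envelope, so that the exponent takes the stated sub-Gaussian form $n h^{d+2j}\epsilon^2$ uniformly in $\epsilon$ in the regime of interest. Once this regime check is in hand --- which is exactly where the hypothesis on $h$ is used --- the remainder of the argument is bookkeeping: identify the VC dimension and envelope of $\mathcal{F}_j$, bound the expected supremum of the empirical process by a standard entropy integral, and upgrade to a tail bound via Talagrand. This is why the authors note that the statement is essentially a restatement of Lemmas~2 and 3 in \cite{Arias-Castro2013}.
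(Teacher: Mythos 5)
Your proposal is correct and follows exactly the route the paper itself points to: the paper states this lemma without proof, citing Gin\'e and Guillou (2002) for the Talagrand-based uniform variance bounds and Arias-Castro et al.\ (2013) for the derivative versions, and your sketch (Taylor expansion with vanishing odd moments for the bias, with the Hessian term losing an order of $h$ because only three derivatives are assumed; Talagrand applied to the VC-subgraph classes of translated kernel derivatives with envelope $O(h^{-(d+j)})$ and variance $O(h^{-(d+2j)})$ for the tails) is precisely that argument. The only point worth emphasizing in a full write-up is the regime check you already flag, namely that $h\geq(\log n/n)^{1/d}$ is what makes the sub-Gaussian term dominate in Talagrand's bound.
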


{\bf Remark:}
It is not necessary to use a Gaussian kernel.
Any kernel that satisfies the conditions in
\cite{Arias-Castro2013} will do.

To find the modes of $\hat p_h$
we use the well-known mean shift algorithm.
See Figures \ref{fig::meanshift} and
\ref{fig::meanshiftpicture}.
The algorithm approximates the flow defined by
(\ref{eq::flow}).
The algorithm finds the modes, the basins of attractions
and the destination $\hat{\rm dest}(x)$
of any point $x$.
A rigorous analysis of the algorithm can be found in
\cite{Arias-Castro2013}.

\begin{figure}
\fbox{\parbox{6in}{
\begin{center}
{\sc Mean Shift}
\end{center}
\begin{center}
\begin{enum}
\item Choose a set of grid points $G=\{g_1,\ldots, g_N\}$.
Usually, these are taken to be the data points.
\item For each $g\in G$, iterate until convergence:
$$
g^{(r+1)} \longleftarrow \frac{\sum_i X_i K(||g^{(r)}-X_i||/h)}{\sum_i K(||g^{(r)} -X_i||/h)}.
$$
\item 
Let $\hat{\cal M}$ be the unique elements of
$\{g_1^{(\infty)},\ldots, g_N^{(\infty)}\}$.
Output $\{g_1^{(\infty)},\ldots, g_N^{(\infty)}\}$, $\hat{\cal M}$
and $\hat{\rm dest}(g_j) = g_j^{(\infty)}$.
\end{enum}
\end{center}
}}
\caption{The Mean Shift Algorithm}
\label{fig::meanshift}
\end{figure}

\begin{figure}
\begin{center}
\includegraphics[scale=.4]{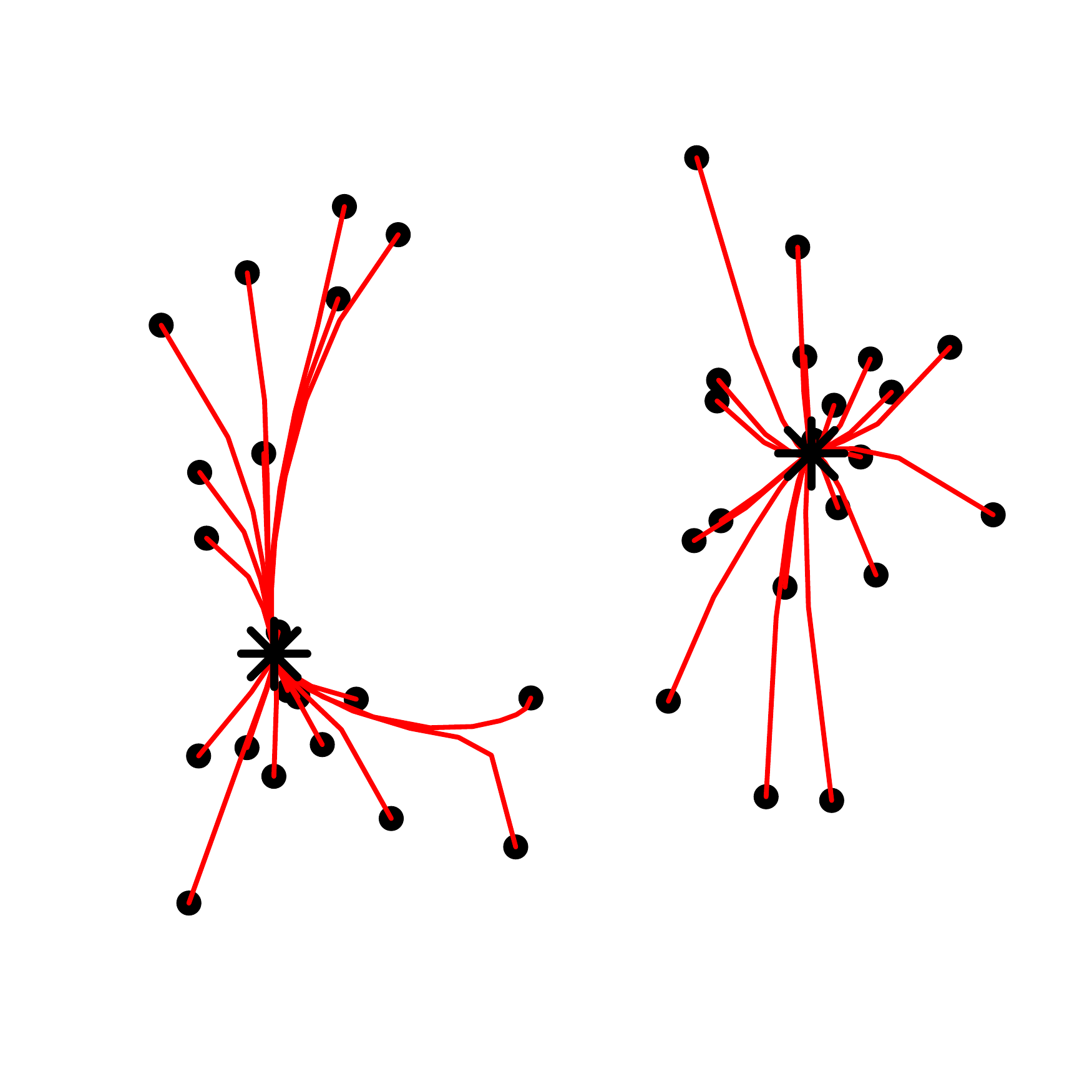}
\end{center}
\vspace{-.5in}
\caption{An illustration of the mean shift algorithm.
The data are moved to the two modes along their gradient ascent paths.}
\label{fig::meanshiftpicture}
\end{figure}

\section{Bounding the Risk}

We are now ready to bound the clustering risk.
We begin by introducing
some preliminary concepts.

\subsection{Stability}

To bound the clustering risk,
we need to control how much the critical points can change
when the density is perturbed.
In particular,
we need the following result which is Lemma 16 from Chazal et al (2015).

\begin{lemma}\label{lemma::morse}
Let $p$ be a density with compact support.
Assume that $p$ is a Morse function with finitely many critical values
$C=\{c_1,\ldots, c_L\}$ and that
$p$ has two continuous derivatives on the interior of its support and non-vanishing
gradient on the boundary of its support.
Let $q$ be another density and let
$\eta = \max\{\eta_0,\eta_1,\eta_2\}$
where
$$
\eta_0 = \sup_x|p(x) - q(x)|,\ 
\eta_1 = \sup_x|| \nabla p(x) - \nabla q(x)||,\ 
\eta_2 = \sup_x|| \nabla^2 p(x) - \nabla^2 q(x)||
$$
where $\nabla^2$ is the vec of the Hessian.
There are constants $\kappa\equiv \kappa(p)$
and $A \equiv A(p)$
such that,
if $\eta\leq \kappa$ then the following is true.
The function $q$ is Morse and has $L$ critical points
$C'=\{c_1',\ldots, c_L'\}$.
After a suitable relabeling of the indices,
$c_j$ and $c_j'$ have the same Morse index for all $j$ and
$\max_j ||c_j - c_j'|| \leq A(p) \eta.$
\end{lemma}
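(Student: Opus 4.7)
The plan is to localize around each critical point of $p$ using the Morse non-degeneracy, apply a quantitative inverse function theorem, and then rule out stray critical points of $q$ by a compactness argument. Throughout, the perturbation $q$ is controlled by $\eta$ through the three estimates $\eta_0,\eta_1,\eta_2$.

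First I would establish a global lower bound on $\|\nabla p\|$ away from the critical set. Let $U_j = B(c_j,r)$ be small balls around each $c_j$ with $r$ chosen so that the $U_j$ are disjoint, contained in the interior, and so that on each $U_j$ the Hessian of $p$ is uniformly close (by continuity of $\nabla^2 p$) to $\nabla^2 p(c_j)$. Because $p$ has finitely many critical points, its gradient does not vanish on the compact set $\mathcal{K}\setminus \bigcup_j U_j$ (the hypothesis that $\nabla p$ is non-vanishing on $\partial \mathcal{K}$ is what makes this uniform up to the boundary). Hence there exists $\delta(p) > 0$ with $\|\nabla p(x)\| \geq \delta$ on $\mathcal{K}\setminus \bigcup_j U_j$. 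If $\eta_1 < \delta$, then $\|\nabla q(x)\| \geq \delta - \eta_1 > 0$ outside the $U_j$'s, so every critical point of $q$ lies inside some $U_j$.

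Next I would show each $U_j$ contains exactly one critical point of $q$. Since $p$ is Morse, $\lambda_{\min}(|\nabla^2 p(c_j)|) = \mu_j > 0$. Choose $r$ small enough (and reduce $\kappa$ accordingly) that for every $x\in U_j$ the symmetric matrix $\nabla^2 p(x)$ has singular values bounded below by $\mu_j/2$, and hence
\[
\|\nabla p(x) - \nabla p(c_j)\| \geq \tfrac{\mu_j}{2}\,\|x - c_j\|
\]
by the mean value inequality applied to $\nabla p$. Now consider the map $F(x) = x - (\nabla^2 p(c_j))^{-1}\nabla q(x)$. On $U_j$ it is a contraction for small enough $\eta_1,\eta_2$ (because $\nabla q \approx \nabla p$ and $\nabla^2 q \approx \nabla^2 p(c_j)$), so Banach's fixed point theorem gives a unique fixed point $c_j' \in U_j$, which is the unique zero of $\nabla q$ there. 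Taking norms of $\nabla p(c_j') = \nabla p(c_j') - \nabla q(c_j')$ and combining with the display above yields
\[
\tfrac{\mu_j}{2}\,\|c_j' - c_j\| \;\leq\; \|\nabla p(c_j') - \nabla p(c_j)\| \;\leq\; \eta_1,
\]
which gives the distance bound with $A(p) = 2/\min_j \mu_j$.

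Finally, Morse-index preservation follows from the triangle inequality
$\|\nabla^2 q(c_j') - \nabla^2 p(c_j)\| \leq \|\nabla^2 q(c_j') - \nabla^2 p(c_j')\| + \|\nabla^2 p(c_j') - \nabla^2 p(c_j)\| \leq \eta_2 + c\,\|c_j' - c_j\|,$
together with Weyl's inequality: for $\eta$ small enough the eigenvalues of $\nabla^2 q(c_j')$ stay within $\mu_j/2$ of those of $\nabla^2 p(c_j)$, so their signs (and hence the Morse index) are preserved, which in particular shows $\nabla^2 q(c_j')$ is non-singular and $q$ is Morse. The main obstacle is the first step: getting a genuinely uniform lower bound on $\|\nabla p\|$ outside the $U_j$'s up to the boundary of $\mathcal{K}$, which is exactly why the hypothesis of non-vanishing gradient on $\partial \mathcal{K}$ is needed; without it, critical points of $q$ could escape to the boundary and the counting argument would break.
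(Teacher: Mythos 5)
The paper does not actually prove this lemma: it is imported verbatim as Lemma 16 of Chazal et al.\ (2015), so there is no in-paper argument to compare yours against. That said, your proposal is the standard stability-of-critical-points argument and it is essentially sound: (i) a uniform gradient lower bound on the compact set obtained by deleting small balls around the $c_j$, forcing all critical points of $q$ into those balls once $\eta_1<\delta$; (ii) a Newton/contraction map to produce a unique zero of $\nabla q$ in each ball; (iii) the quantitative bound $\|c_j'-c_j\|\leq 2\eta_1/\mu_j$ from the coercivity of $\nabla p$ near $c_j$; and (iv) Weyl's inequality to preserve the index and non-degeneracy. This is, in substance, the same proof as in the cited source.

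Three details are worth tightening. First, for the Banach fixed point step you must also check that $F$ maps a ball around $c_j$ into itself; this follows from $\|F(c_j)-c_j\|=\|(\nabla^2 p(c_j))^{-1}\nabla q(c_j)\|\leq\|(\nabla^2 p(c_j))^{-1}\|\,\eta_1$ (using $\nabla p(c_j)=0$) together with the contraction constant, but it should be stated. Second, the bound $\|\nabla^2 p(c_j')-\nabla^2 p(c_j)\|\leq c\,\|c_j'-c_j\|$ presumes a Lipschitz Hessian, which the hypotheses (two continuous derivatives) do not give; replace it by the modulus of continuity of $\nabla^2 p$ on the compact support, which suffices since you only need this difference to be eventually smaller than $\mu_j/2-\eta_2$. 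Third, your counting argument only controls critical points of $q$ inside the support of $p$; since $q$ is merely ``another density,'' it could a priori have critical points outside that set or arbitrarily near $\partial{\cal K}$, where $\nabla p$ either vanishes identically or is only bounded below on the boundary itself. You correctly identify the non-vanishing-gradient hypothesis as the relevant one, but to make the exclusion airtight you need the lower bound $\|\nabla p\|\geq\delta$ to hold on a full neighborhood of $\partial{\cal K}$ (which follows from continuity and compactness of $\partial{\cal K}$) and an explicit convention restricting attention to critical points in that region; this is a gap in the lemma's statement as much as in your proof.
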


\subsection{The Cluster Cores}

An important part of our analysis involves, what we refer to as,
the cluster cores.
These are the high density regions inside each cluster.
Consider the clusters ${\cal C} =\{ {\cal C}_1,\ldots, {\cal C}_k\}$.
Define
\begin{equation}
\xi_j = \sup_{x\in\partial {\cal C}_j}p(x)
\end{equation}
where
$\partial {\cal C}_j$ is the boundary of
${\cal C}_j$.
For any $a\geq 0$
we define the $j^{\rm th}$ {\em cluster core} by
\begin{equation}
{\cal C}^\dagger_j(a) = \Bigl\{x\in {\cal C}_j:\ p(x) \geq \xi_j + a\Bigr\}.
\end{equation}
See Figure \ref{fig::xi}.

\begin{theorem}\label{thm::core-stuff}
Let $p$ be a density function with compact support.
Assume that $p$ is a Morse function with finitely many critical values
and that
$C_g \equiv \sup_x ||g(x)|| < \infty$
where $g$ is the gradient of $p$.
Let $\tilde p$ be another density and 
define $\eta,\eta_0,\eta_1,\eta_2, A(p)$ and $\kappa(p)$ as in Lemma \ref{lemma::morse}.
Let $\tilde \pi$ denote the paths defined by $\tilde p$.
Let ${\cal C}$ be a cluster of $p$ with mode $m$ and
let $\xi = \sup_{x\in \partial {\cal C}}p(x)$.
Let $a = C_g A \eta + 2 \eta_0$.
Assume that $\eta < \kappa(p)$
and that
\begin{equation}
p(m) > a + A \eta C_g +\xi = 2A \eta C_g + 2\eta_0 + \xi.
\end{equation}
Then the following hold:
\begin{enum}
\item If $x \in {\cal C}^\dagger(a)$ then
$d(x,\partial {\cal C})\geq \frac{a}{C_g}$
where $d(x,A) = \inf_{y\in A}||x-y||$.
\item $B(m,A\eta) \subset {\cal C}^\dagger(a-2\eta_0)$.
\item $\tilde p$ has a mode $\tilde m \in {\cal C}^\dagger(a-2\eta_0)$.
\item $\tilde p$ has no other critical points in 
${\cal C}^\dagger(a-2\eta_0)$.
\item Let $x\in {\cal C}^\dagger(a)$.
Then $\tilde \pi_x(t) \in {\cal C}^\dagger(a-2\eta_0)$
for all $t\geq 0$.
\item Let $x,y\in {\cal C}^\dagger(a)$.
Then ${\rm dest}(x) = {\rm dest}(y) = m$ and
$\tilde{\rm dest}(x) = \tilde{\rm dest}(y) = \tilde{m}$.
Hence,
$c(x,y) = \tilde c (x,y)$.
\end{enum}
\end{theorem}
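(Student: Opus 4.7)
The plan is to prove the six items in order, with (5) being the only truly substantive step; the others are essentially setup, and all rest on two recurring ingredients: the global Lipschitz bound $|p(x)-p(y)| \le C_g \|x-y\|$, which trades displacement for density change, and the sandwich $|p-\tilde p| \le \eta_0$, which switches between $p$ and $\tilde p$ at a fixed toll.

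For (1), if $x\in{\cal C}^\dagger(a)$ and $y\in\partial{\cal C}$ then $p(x)-p(y)\ge a$, and the Lipschitz bound forces $\|x-y\|\ge a/C_g$; take infimum over $y$. For (2), the standing assumption $p(m)>2A\eta C_g+2\eta_0+\xi$ rewrites as $p(m)>\xi+a+A\eta C_g$, so $m\in{\cal C}^\dagger(a)$, and (1) gives $d(m,\partial{\cal C})\ge a/C_g\ge A\eta$, hence $B(m,A\eta)\subset{\cal C}$; the Lipschitz bound again gives $p(x)\ge p(m)-C_g A\eta\ge\xi+(a-2\eta_0)$ for $x\in B(m,A\eta)$. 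For (3), Lemma \ref{lemma::morse} produces a critical point $\tilde m$ of $\tilde p$ of the same Morse index (hence a mode) with $\|m-\tilde m\|\le A\eta$, so $\tilde m\in{\cal C}^\dagger(a-2\eta_0)$ by (2). For (4), suppose $\tilde p$ had another critical point $\tilde c'\ne\tilde m$ in ${\cal C}^\dagger(a-2\eta_0)$; its partner $c'\ne m$ under Lemma \ref{lemma::morse} sits within $A\eta$ of $\tilde c'$, and (1) applied at level $a-2\eta_0$ gives $d(\tilde c',\partial{\cal C})\ge A\eta$, so $c'\in\overline{\cal C}$. Property 6 of Morse functions then forces $c'\in\partial{\cal C}$, where $p(c')\le\xi$, and the Lipschitz bound yields $p(\tilde c')\le\xi+C_g A\eta=\xi+(a-2\eta_0)$, contradicting strict membership of $\tilde c'$ in the core.

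The heart of the argument is (5). Fix $x\in{\cal C}^\dagger(a)$; then $\tilde p(x)\ge p(x)-\eta_0\ge\xi+a-\eta_0$. Because $\tilde p$ is nondecreasing along its own flow $\tilde\pi_x$, we have $\tilde p(\tilde\pi_x(t))\ge\xi+a-\eta_0$ for all $t\ge 0$, and paying the $\eta_0$ toll once more gives $p(\tilde\pi_x(t))\ge\xi+a-2\eta_0>\xi$, the strictness coming from $a-2\eta_0=C_g A\eta>0$. A continuous path along which $p$ strictly exceeds $\xi$ can never touch $\partial{\cal C}$ (where $p\le\xi$), so $\tilde\pi_x(t)$ stays in ${\cal C}$ for all $t\ge 0$, and therefore in ${\cal C}^\dagger(a-2\eta_0)$. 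Then (6) is a corollary: for $x,y\in{\cal C}^\dagger(a)\subset{\cal C}$, the $p$-flow gives $\mathrm{dest}(x)=\mathrm{dest}(y)=m$ by the very definition of the cluster, while by (5) the $\tilde p$-flow remains in the compact core and must converge to a critical point of $\tilde p$ there, which by (4) can only be $\tilde m$.

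The main obstacle is sequencing (5) correctly: one must apply monotonicity to $\tilde p$ under its own flow \emph{before} translating back to $p$, not after, since one has no direct access to the $p$-flow along $\tilde\pi_x$. The $2\eta_0$ shrinkage of the radius from $a$ down to $a-2\eta_0$ is exactly the cost of this two-step switch, and this in turn is what forces the tuning $a=C_g A\eta+2\eta_0$ used in the hypothesis.
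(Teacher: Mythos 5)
Your proof is correct and follows essentially the same route as the paper's: the Lipschitz/Taylor bound for (1)--(2), Lemma \ref{lemma::morse} for (3)--(4), and the $\eta_0$-sandwich combined with monotonicity of $\tilde p$ along its own flow for (5)--(6). In fact you are slightly more careful than the paper in two spots it glosses over --- checking that $B(m,A\eta)\subset{\cal C}$ in part (2) and that the path $\tilde\pi_x$ cannot cross $\partial{\cal C}$ in part (5) --- so no changes are needed.
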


\begin{proof}

1. Let $z$ be the projection of $x$ onto 
$\partial {\cal C}$.
(Choose any projection if it is not unique.)
Using an exact Taylor expansion,
\begin{align*}
\xi + a & \leq p(x) = p(z) + (x-z)^T \int_0^1 g(z + u(x-z)) \, du\\
& \leq p(z) + C_g ||x-z|| = p(z) + C_g d(x,\partial {\cal C})\\
& \leq \xi + C_g d(x,\partial {\cal C}).
\end{align*}

2. Let $x\in B(m,A\eta)$.
Then
\begin{align*}
p(x) &= p(m) + (x-m)^T \int_0^1 g(m + u (x-m)) du \geq
p(m) - ||x-m|| C_g \geq p(m) - A\eta C_g > a + \xi
\end{align*}
and hence
$x\in {\cal C}^\dagger(a) \subset {\cal C}^\dagger(a-2\eta_0)$.

3. By Lemma 
\ref{lemma::morse},
$\tilde p$ has a mode $\tilde m$ such that
$||m-\tilde m||\leq A \eta$.
The result then follows from part 2.

4. Let $\tilde c$ be a critical point of $\tilde p$
different from $\tilde m$.
By Lemma \ref{lemma::morse},
there is a critical point $c$ of $p$ such that
$||c-\tilde c|| \leq A \eta$.
Now $c$ must be on the boundary of some cluster or must be a minimum.
Either way, it is not in the interior of ${\cal C}$.
Let $r$ be the point on $\partial {\cal C}$ closest to $c$.
Then
$d(c,{\cal C}^\dagger(a-2\eta_0)) \geq d(r,{\cal C}^\dagger(a-2\eta_0))$.
By part 1,
$d(r,{\cal C}^\dagger(a-2\eta_0)) > (a-2\eta_0)/C_g$.
Thus,
$d(\tilde c,{\cal C}^\dagger(a-2\eta_0)) > (a-2\eta_0)/C_g - A \eta$.
By the definition of $a$, it follows then that
$d(\tilde c,{\cal C}^\dagger(a-2\eta_0)) >0$ and hence
$\tilde c\notin {\cal C}^\dagger(a-2\eta_0)$.

5. Let $x\in {\cal C}^\dagger(a)$.
Then, for any $t\geq 0$,
\begin{align*}
p(\tilde \pi_x(t)) & \geq 
\tilde p(\tilde \pi_x(t)) - \eta_0 \geq
\tilde p(\tilde \pi_x(0)) - \eta_0 \\
&=
\tilde p(x) - \eta_0 \geq  p(x) - 2\eta_0 \geq \xi + a - 2\eta_0.
\end{align*}

6. Let $x,y\in {\cal C}^\dagger(a)$.
Trivially, we have that ${\rm dest}(x) = {\rm dest}(y) = m$.
From the previous result,
$\tilde{\rm dest}(x) \in {\cal C}^\dagger(a-2\eta_0)$.
From parts 3 and 4, the only critical point of $\tilde p$ in 
${\cal C}^\dagger(a-2\eta_0)$ is $\tilde m$.
Similarly for $y$.
Hence,
$\tilde{\rm dest}(x) = \tilde{\rm dest}(y) = \tilde{m}$.
\end{proof}

\subsection{Bounding the Risk Over the Cores}

Now we bound the risk for the data points that are in the cluster cores.

\begin{theorem}\label{thm::core-risk}
Assume that $p$ is a Morse function with
finitely many critical values.
Denote the modes and clusters by
$m_1,\ldots, m_k$ and
${\cal C}_1,\ldots, {\cal C}_k$.
Let $\hat p_h$
be the kernel density estimator.
Let $\eta = \max\{\eta_0,\eta_1,\eta_2\}$
where
$$
\eta_0 = \sup_x|\hat p_h(x) - p(x)|,\ 
\eta_1 = \sup_x|| \nabla \hat p_h(x) - \nabla p(x)||,\ 
\eta_2 = \sup_x|| \nabla^2 \hat p_h(x) - \nabla^2 p(x)||.
$$
Let $a = C_g A \eta + 2\eta_0$ and let
${\cal C}^\dagger = \bigcup_j {\cal C}^\dagger_j(a)$ and let
${\cal X} = \{ X_i:\ X_i\in {\cal C}^\dagger(a)\}$
be the points in the cores.
Let $\xi_j = \sup \{p(x):\ x\in \partial {\cal C}_j\}$.
\begin{enum}
\item If
$$
p(m_j) >  2A \eta C_g + 2\eta_0 + \xi_j
$$
for each $j$,
then
$\hat c(X_i,X_j) = c(X_i,X_j)$ for every $X_i,X_j \in {\cal X}$.
\item If $h_n\to 0$ and $n h_n^{d+4} \to \infty$, then
\begin{equation}
\mathbb{P}\Biggl( \hat c(X_i,X_j) \neq c(X_i,X_j)\ \ 
{\rm for\ any\ }\ X_i,X_j \in {\cal X}\Biggr) \leq e^{-n b}
\end{equation}
for some $b>0$ (independent of $d$).
\end{enum}
\end{theorem}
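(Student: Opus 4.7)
The plan is to derive Part 1 as a direct consequence of Theorem \ref{thm::core-stuff} applied cluster by cluster, and then to derive Part 2 from Part 1 together with Lemma \ref{lemma::kernel}, which gives exponential concentration for the sup-norm errors in $\hat p_h$, $\nabla \hat p_h$, and $\nabla^2 \hat p_h$.

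For Part 1, the hypothesis $p(m_j) > 2 A \eta C_g + 2 \eta_0 + \xi_j$ is precisely what Theorem \ref{thm::core-stuff} asks of the cluster $\mathcal{C}_j$, with the perturbation taken to be $\tilde p = \hat p_h$. Apply that theorem once for each $j$. If two data points $X_i, X_k$ lie in the same core $\mathcal{C}_j^\dagger(a)$, then part 6 of Theorem \ref{thm::core-stuff} gives $c(X_i, X_k) = 1 = \hat c(X_i, X_k)$. If instead they lie in two distinct cores $\mathcal{C}_j^\dagger(a)$ and $\mathcal{C}_l^\dagger(a)$, then $c(X_i, X_k) = 0$ is immediate, while parts 3--5 produce in each core a unique mode $\tilde m_j$ of $\hat p_h$ and show that the $\hat p_h$-flow starting from any point of $\mathcal{C}_j^\dagger(a)$ is trapped in $\mathcal{C}_j^\dagger(a - 2\eta_0)$ and so must terminate at $\tilde m_j$; since $\tilde m_j \neq \tilde m_l$, we get $\hat c(X_i, X_k) = 0$ as well.

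For Part 2, set $\delta = \min_j [p(m_j) - \xi_j]$; this is strictly positive because each $m_j$ is an interior strict local maximum of the Morse density on its basin. Choose $\epsilon_0 > 0$ small enough that $\eta \leq \epsilon_0$ simultaneously forces $\eta < \kappa(p)$ and $2 A C_g \eta + 2 \eta_0 < \delta$; with this choice, $\eta \leq \epsilon_0$ triggers the hypothesis of Part 1 for every cluster at once. It then suffices to bound $\mathbb{P}(\eta > \epsilon_0)$. Split each of $\eta_0, \eta_1, \eta_2$ into a bias piece and a variance piece: by the bias bounds in Lemma \ref{lemma::kernel} the bias pieces are $O(h_n)$ and hence fall below $\epsilon_0 / 2$ for all large $n$ (since $h_n \to 0$), and a union bound over the three variance tails from the same lemma controls the remaining probability by $3\exp(-c\, n h_n^{d+4})$ for some $c > 0$ (the Hessian tail dominates), which is exponentially small because $n h_n^{d+4} \to \infty$.

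The main obstacle is cosmetic rather than substantive: as $h_n \to 0$, the rate $\exp(-c n h_n^{d+4})$ that the argument naturally produces is not literally of the form $e^{-nb}$ for a fixed positive constant $b$, so the displayed bound should be understood as exponential decay in $n h_n^{d+4}$ with a dimension-free prefactor. The ``independent of $d$'' claim then amounts to checking that the constants $b_0, b_1, b_2$ of Lemma \ref{lemma::kernel} depend only on the Gaussian kernel and on smoothness/sup-norm quantities of $p$, not on $d$ per se, which is how Talagrand's inequality is used there. Once this bookkeeping is in place, the proof is a clean composition of the deterministic stability statement of Theorem \ref{thm::core-stuff} with the stochastic concentration statement of Lemma \ref{lemma::kernel}.
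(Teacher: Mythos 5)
Your proof follows essentially the same route as the paper's: Part 1 is read off from Theorem \ref{thm::core-stuff} applied cluster by cluster (you spell out the cross-cluster case, which the paper leaves implicit), and Part 2 reduces to bounding $\mathbb{P}(\eta > q)$ for a fixed constant $q>0$ via the bias--variance split and a union bound over the three tail inequalities of Lemma \ref{lemma::kernel}. Your closing caveat is apt rather than cosmetic: the paper's own final step silently asserts $\exp\bigl(-b_2 n h_n^{d+4}(q-c_2 h_n)^2\bigr) \le e^{-nb}$ for fixed $b>0$, which cannot hold when $h_n \to 0$, so the correctly stated rate is $e^{-c\,n h_n^{d+4}}$ exactly as you observe.
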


{\bf Remark:}
Note that $\eta, \eta_0,\eta_1,\eta_2$ are functions of $n$ but we suppress the dependence for simplicity.

\begin{proof}
1. From Lemma \ref{lemma::kernel},
we have that
$\mathbb{P}(\eta > \kappa(p))$ is exponentially small.
Hence, Lemma \ref{lemma::morse} applies.
If
$p(m_j) >  2A \eta C_g + 2\eta_0+\xi$ for all $j$,
then Theorem \ref{thm::core-stuff} implies that
$\hat c(X_i,X_j) = c(X_i,X_j)$ for every $X_i,X_j \in {\cal X}$.

2. We need to show that
$p(m_j) >  2A \eta C_g + 2\eta_0+\xi_j$ for all $j$ so we can apply part 1.
The probability that
$p(m_j) >  2A \eta C_g + 2\eta_0+\xi_j$ fails for some $j$,
is $\mathbb{P}(\eta > q)$
where $q>0$ is a constant.
If $h_n\to 0$ and $n h_n^{d+4} \to \infty$, then
from Lemma \ref{lemma::kernel},
$\mathbb{P}(\eta > q)$ is exponentially small:
\begin{align*}
\mathbb{P}(\eta > q) &\leq
\sum_{j=0}^2 \mathbb{P}(\eta_j > q)\\
& \leq
\exp\left(- b_0 n h_n^d (q- c_0 h_n^2)^2\right) + 
\exp\left(- b_1 n h_n^{d+2} (q- c_1 h_n^2)^2\right) + 
\exp\left(- b_2 n h_n^{d+4} (q- c_2 h_n)^2\right)\\
&\leq e^{-n b}
\end{align*}
for some $b>0$.
\end{proof}

\subsection{Beyond the Cores}

Now we bound the risk beyond the cores.
Furthermore, we explicitly let $d=d_n$ increase with $n$.
This means that the distribution also changes with $n$ so we 
sometimes write $p$ as $p_n$.

Theorem \ref{thm::core-risk}
shows that the risk over the cores 
where $p(x) > \xi +a$
is exponentially small
as long as we take
$a = C \eta$ for some $C>0$.
The total risk is therefore 
the exponential bound plus the probability that a point fails to satisfy
$p(x) > \xi +a$. Formally:

\begin{corollary}
Assume the conditions of
Theorem \ref{thm::core-risk}.
The cluster risk is bounded by
\begin{equation}
P(p(X) < \xi + C \eta) + e^{-n b}.
\end{equation}
\end{corollary}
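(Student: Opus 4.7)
The plan is to reduce the risk $R = \mathbb{E}[L]$ to a single pairwise misclassification probability and then split that probability according to whether both points land in the union of cluster cores. By linearity of expectation and the exchangeability of the sample, $R = \mathbb{P}(\hat c(X_1,X_2) \neq c(X_1,X_2))$, so it suffices to bound this pairwise probability. Following Theorem \ref{thm::core-risk}, set $a = C_g A \eta + 2\eta_0$ and $\mathcal{C}^\dagger = \bigcup_j \mathcal{C}^\dagger_j(a)$.

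The key step is the decomposition
\[
\{\hat c(X_1,X_2) \neq c(X_1,X_2)\} \subseteq \{X_1 \notin \mathcal{C}^\dagger\} \cup \{X_2 \notin \mathcal{C}^\dagger\} \cup \bigl(\{X_1,X_2\in\mathcal{C}^\dagger\} \cap \{\hat c \neq c\}\bigr).
\]
For the last event, Theorem \ref{thm::core-risk}(2) applied to the single pair $(X_1,X_2)$ gives a bound of $e^{-nb}$ directly. For the first two events, observe that a point $X$ fails to lie in $\mathcal{C}^\dagger$ exactly when $p(X) < \xi_{j(X)} + a \le \xi + a$, where $j(X)$ denotes the cluster index of $X$ and $\xi := \max_j \xi_j$. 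Since $a = C_g A \eta + 2\eta_0 \leq C'\eta$ for a constant $C'$ depending only on $p$, a union bound yields $\mathbb{P}(X_i \notin \mathcal{C}^\dagger) \leq \mathbb{P}(p(X) < \xi + C'\eta)$ for $i=1,2$. Summing the two terms and absorbing the factor $2$ into the constant gives the first term $P(p(X) < \xi + C\eta)$ of the claimed bound.

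The only mildly subtle point is that $\eta$ is itself a random quantity depending on the sample used to build $\hat p_h$, so the event $\{p(X) < \xi + C\eta\}$ must be understood jointly over $X$ and the training sample; this matches the implicit usage in the statement. Since Theorem \ref{thm::core-risk} has already done the substantive work (morse-theoretic stability together with concentration of $\eta$ via Lemma \ref{lemma::kernel}), this corollary is essentially a packaging step, and I do not anticipate any genuine obstacle beyond careful bookkeeping of constants and making sure the constant $C$ absorbs both $C_g A$, the factor from $2\eta_0 \leq 2\eta$, and the final factor of $2$ from the union bound.
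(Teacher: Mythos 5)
Your proposal is correct and follows essentially the same route as the paper, which gives only a one-sentence justification (the risk equals the single-pair error probability, split according to whether both points lie in the cluster cores, with the in-core term handled by Theorem~\ref{thm::core-risk} and the out-of-core term bounded by $P(p(X)<\xi+C\eta)$). Your write-up just makes the paper's implicit bookkeeping explicit; the only cosmetic quibble is that the union bound really produces $2\,P(p(X)<\xi+C\eta)$ and the factor $2$ cannot literally be absorbed into the constant $C$ inside the probability, but the paper is equally cavalier about this and it is immaterial to how the corollary is used.
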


Note that, in the corollary, it is not necessary to let $h\to 0$.
To further control the risk beyond the cores, 
we need to make sure that
$P(p(X) < \xi + C \eta)$ is small.
To do this,
especially in the high-dimensional case,
we need to assume that the clusters are well-defined and are well-separated.
We call these assumptions ``low noise'' assumptions
since they are similar in spirit
to the Tsybakov low noise assumption that is often used in 
high-dimensional classification 
\citep{Audibert2007}.
Specifically, we assume that following:

(Low Noise Assumptions:)
\begin{enum}
\item Let $\sigma_n$ be the minimal distance between critical points of $p_n$.
We assume that $\sigma = \liminf_n \sigma_n > 0$.
\item Let $m_n$ be the number of modes of $p_n$.
Then $\limsup_{n\to\infty} m_n < \infty$.
\item $\lim_{n\to \infty}\min_j p_n(m_j) > 0$.
\item $\xi_n \leq n^{-\gamma}$ for some $\gamma>0$ where
$\xi_n = \sup_{x\in D} p_n(x)$ and
$D = \bigcup_j \partial C_j$.
\item For all small $\epsilon$,
$P(p_n(X) < \epsilon) \leq \epsilon^\beta$
where $\beta = \beta_d$ is increasing with $d$.
\end{enum}

Parts 1-3 capture the idea
that the clusters are well-defined.
It is really parts 4 and 5 that
capture the low noise idea.
In particular, 
part 4 says that the density at the cluster boundaries is small.
(See Figure \ref{fig::xi}.)
Part 5 rules out thick tails.
Note that for a multivariate Normal $N(0,\sigma^2 I)$,
we have that, for any fixed small $\epsilon>0$,
$P(p(X) < \epsilon) \leq e^{-d}$
when $\sigma$ is not too large.
So part 5 automatically holds for distributions with Gaussian-like tails.

\begin{figure}
\begin{center}
\includegraphics[scale=.5]{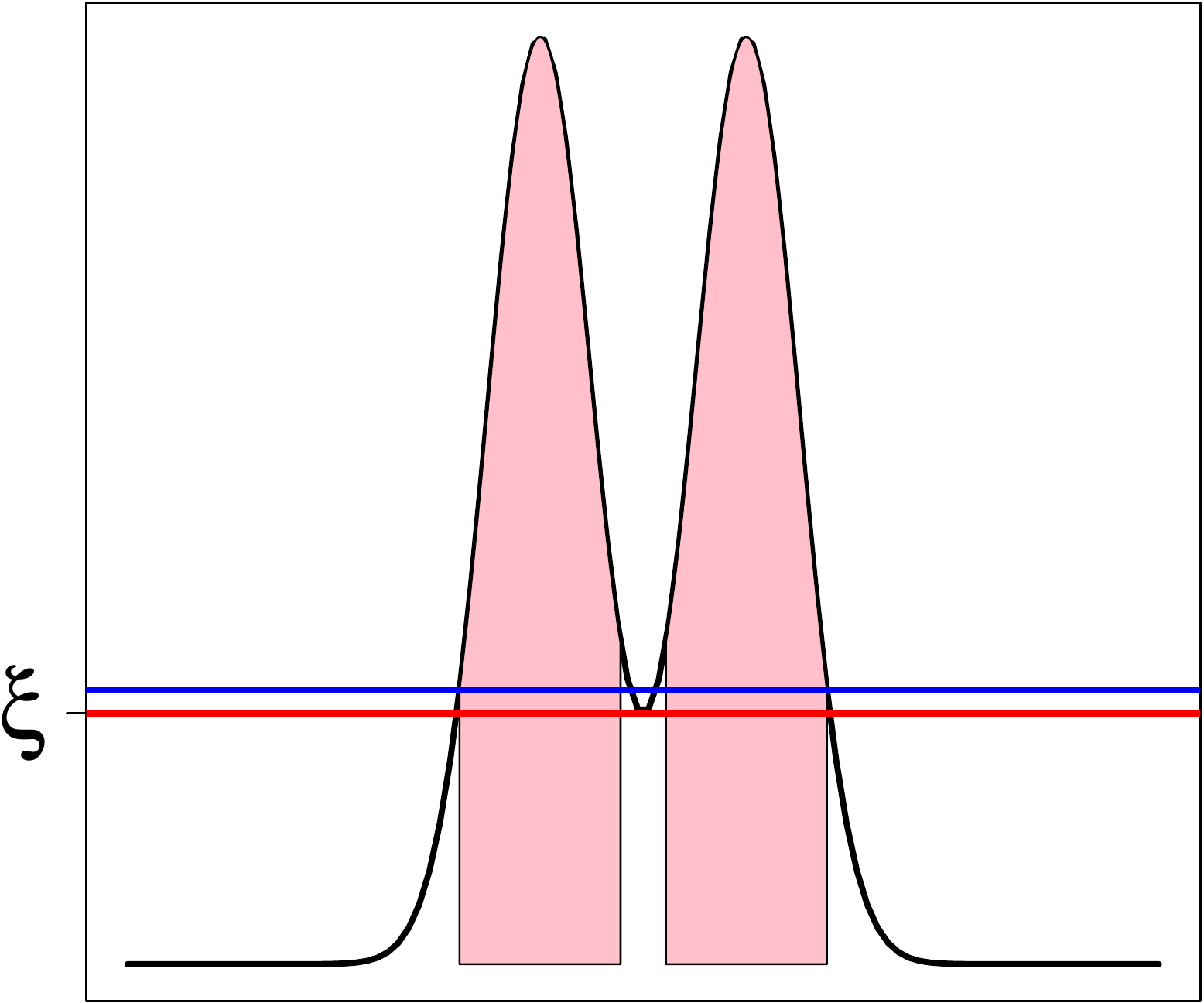}\includegraphics[scale=.5]{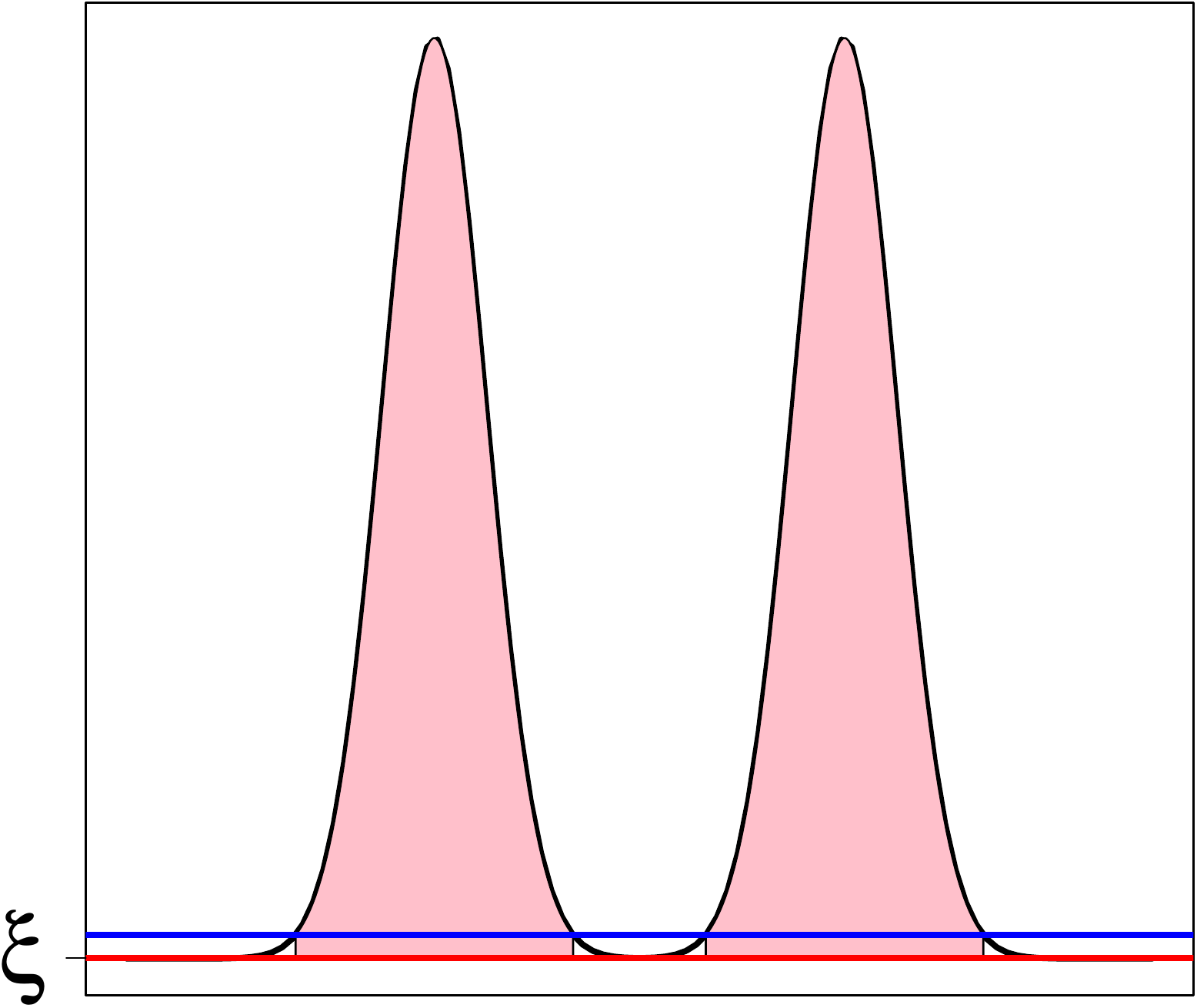}
\end{center}
\caption{\em Left: When clusters are not well separated, $\xi$ is large.
In this case, the mass inside the cluster but outside the core can be large.
Right: When clusters are well separated, $\xi$ is small.
The blue lines correspond to $p(x)=\xi+a$ for $a>0$.
The pink regions are the cluster cores.}
\label{fig::xi}
\end{figure}

\begin{theorem}
Assume that $p_n$ is Morse and that
the low noise conditions hold.
Assume that $p_n$ has three bounded continuous derivatives .
Let $h_n \asymp n^{-1/(5+d)}$. 
Then the clustering risk $R$ satisfies
$$
R \preceq 
\left[\left(\frac{\log n}{n}\right)^{\frac{ \beta}{5+d}} \bigvee 
\left(\frac{1}{n}\right)^{\beta \gamma}\right] + e^{-n b}.
$$
In particular,
$R = O(\sqrt{\log n /n})$
when
$\beta_d \geq \max\{ (d+5)/2,\  1/(2\gamma)\}$.
\end{theorem}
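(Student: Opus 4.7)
My plan is to start from the Corollary and reduce the task to bounding $P(p_n(X)<\xi_n+C\eta)+e^{-nb}$, where $\eta=\max\{\eta_0,\eta_1,\eta_2\}$ is the random sup-norm error of the kernel estimator and its first two derivatives from Lemma \ref{lemma::kernel}, and $X\sim p_n$ is independent of the data. Since $\eta$ depends on the sample while $X$ does not, iterated expectation gives
\[
P(p_n(X)<\xi_n+C\eta)=\E[F_{p_n}(\xi_n+C\eta)], \qquad F_{p_n}(t):=P(p_n(X)\le t).
\]
Low-noise assumption 5 then implies $F_{p_n}(t)\preceq t^\beta$ for small $t$, so the right-hand side is at most $\E[(\xi_n+C\eta)^\beta]\preceq \xi_n^\beta+\E[\eta^\beta]$. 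Low-noise assumption 4 handles the first piece directly via $\xi_n^\beta\le n^{-\beta\gamma}$, which is the second term in the claimed max.

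Next I would bound $\E[\eta^\beta]$ using the tail estimates of Lemma \ref{lemma::kernel} with $h_n\asymp n^{-1/(d+5)}$. The slowest of the three components is the Hessian error $\eta_2$, which has bias $O(h_n)$ and concentration tail $\P(\eta_2-O(h_n)>s)\le \exp(-b_2\,nh_n^{d+4}s^2)$; with $h_n\asymp n^{-1/(d+5)}$ we have $nh_n^{d+4}\asymp n^{1/(d+5)}$. Writing $\E[\eta^\beta]=\beta\int_0^\infty s^{\beta-1}\P(\eta>s)\,ds$, splitting the integral at a small constant so that assumption 5 is applicable on the inner region, and absorbing the far-tail contribution $\{\eta\ge\text{const.}\}$ into the $e^{-nb}$ term via the constant-$\epsilon$ case of Lemma \ref{lemma::kernel}, yields $\E[\eta^\beta]\preceq (\log n/n)^{\beta/(d+5)}$. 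Assembling the pieces gives
\[
R\preceq n^{-\beta\gamma}+(\log n/n)^{\beta/(d+5)}+e^{-nb},
\]
which is the claimed bound. The particular case $\beta\ge\max\{(d+5)/2,\,1/(2\gamma)\}$ is then immediate by comparing exponents: $\beta\ge(d+5)/2$ forces $(\log n/n)^{\beta/(d+5)}\le\sqrt{\log n/n}$ and $\beta\gamma\ge 1/2$ forces $n^{-\beta\gamma}\le n^{-1/2}\le\sqrt{\log n/n}$.

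The main obstacle is extracting the rate $(\log n/n)^{\beta/(d+5)}$ for $\E[\eta^\beta]$ cleanly. Two issues complicate the tail-integral step: (i) assumption 5 only controls $F_{p_n}(t)$ for $t$ small, so the integration range must be split with the outer region handled separately via the crude constant-$\epsilon$ bound of Lemma \ref{lemma::kernel}; and (ii) at $h_n\asymp n^{-1/(d+5)}$ the Hessian bias $O(h_n)$ and stochastic part $O(\sqrt{\log n/(nh_n^{d+4})})$ must be balanced in the moment sense rather than the usual minimax sense, with careful log-factor bookkeeping to recover the advertised exponent $1/(d+5)$ rather than a worse one.
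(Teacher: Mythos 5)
Your proposal follows essentially the same route as the paper: reduce to the Corollary, bound $P(p_n(X)<\xi_n+C\eta)$ using low-noise assumptions 4 and 5, and control $\eta$ via Lemma~\ref{lemma::kernel} at $h_n\asymp n^{-1/(d+5)}$. Where the paper simply asserts $\eta=O_P\bigl((\log n/n)^{1/(5+d)}\bigr)$ and substitutes, you make that step rigorous by conditioning on the sample and integrating the tail of $\eta$; this is a refinement of, not a departure from, the paper's argument. The obstacle you flag at the end is genuine --- at this bandwidth the stochastic part of the Hessian error is of order $\sqrt{\log n/(nh_n^{d+4})}\asymp\sqrt{\log n}\,n^{-1/(2(d+5))}$, which exceeds the advertised $(\log n/n)^{1/(d+5)}$ --- but the paper's own one-line proof silently elides exactly the same point, so this is an issue with the stated rate rather than with your approach.
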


\begin{proof}
For points in the core,
the risk is controlled by
Theorem \ref{thm::core-risk}.
We need now bound the number of pairs outside the cores.
For this, it suffices to bound
$$
\mathbb{P}(p_n(X) < \xi_n + C_g A \eta + 2\eta_0).
$$
For this choice of bandwidth, 
Lemma \ref{lemma::kernel}
implies that
$\eta = O_P(\log n/n^{5+d})$.
From the low noise assumption, the above probability is bounded by
$\xi_n^\beta \vee (\log n/n)^{\beta/(d+5)}$.
\end{proof}

{\bf Remark:}
Parts 4 and 5 of the low noise assumption can be replaced
by a single, slightly weaker assumption, namely,
$P(|p_n(X)-\xi_{n,j}|\leq \epsilon)\leq \epsilon^\beta$
where $\xi_{n,j} = \sup_{x\in\partial {\cal C}_j}p(x)$.
The condition only need hold near the boundaries of the clusters.

\subsection{Gaussian Clusters}

Recently, 
\cite{tan2015statistical}
showed that a type of clustering known as convex clustering
yields the correct clustering with high probability,
even with increasing dimension,
when the data are from a mixture of Gaussians.
They assume that each Gaussian has covariance $\sigma^2 I$
and that the means are separated by a factor of order $\sqrt{d}$.
Here we show a similar result for mode clustering.
The clustering is based on a kernel estimator with a small but fixed
bandwidth $h>0$.

Let
$X_1,\ldots, X_n \sim \sum_{j=1}^k \pi_j N(\mu_j,\sigma^2 I)$
so that $X_i$ has density
$$
p(x) = \sum_{j=1}^k  \frac{\pi_j}{\sigma^d (2\pi)^{d/2}}e^{- ||X-\mu_j||^2/(2\sigma^2)}.
$$

\begin{lemma}
Let
$X\sim p$ and let
$\epsilon >0$.
Suppose that 
\begin{equation}\label{eq::eps-condition}
\epsilon \leq 
\min_j \left( \frac{\pi_j^{1/d}}{\sqrt{2\pi} \sigma e^{16}}\right)^d
\end{equation}
and that
\begin{equation}\label{eq::sep-condition}
\min_{j\neq k}||\mu_j - \mu_k|| >
2\sigma \max_j 
\sqrt{2d \log\left(\frac{1}{\sigma \sqrt{2\pi}}\right) + 
2 \log\left(\frac{1}{\epsilon}\right)-
2\log\left(\frac{1}{\pi_j}\right)}.
\end{equation}
Then
$$
\mathbb{P}(p(X) < \epsilon) \leq e^{-8d}.
$$
\end{lemma}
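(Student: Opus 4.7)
The plan is to reduce the problem to a standard chi-squared tail bound by decomposing $X$ according to its latent mixture component. The starting observation is that $p$ is a sum of nonnegative terms, so $p(x)\ge \pi_j\,\phi_{\mu_j,\sigma}(x)$ for every $j$, where $\phi_{\mu_j,\sigma}$ is the $N(\mu_j,\sigma^2 I)$ density. Introducing a latent component indicator $J$ with $\mathbb{P}(J=j)=\pi_j$ and $X\mid J\sim N(\mu_J,\sigma^2 I)$, the pointwise bound becomes $p(X)\ge \pi_J\,\phi_{\mu_J,\sigma}(X)$, and conditional on $J$ the quantity $\|X-\mu_J\|^2/\sigma^2$ has the $\chi^2_d$ distribution.

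Next I would translate the target inequality $p(X)\ge\epsilon$ into a chi-squared tail. Taking logarithms shows that $\pi_J\,\phi_{\mu_J,\sigma}(X)\ge\epsilon$ is equivalent to an upper bound of the form $\chi^2_d \le 2\log(\pi_J/\epsilon)-2d\log(\sigma\sqrt{2\pi})$, and a short calculation confirms that the hypothesis (\ref{eq::eps-condition}) is calibrated exactly so that this threshold is at least $32d$. It therefore suffices to show $\mathbb{P}(\chi^2_d>32d)\le e^{-8d}$, which I would obtain from the Laurent--Massart inequality $\mathbb{P}(\chi^2_d \ge d+2\sqrt{dt}+2t)\le e^{-t}$ with slack parameter $t=8d$ (since $1+2\sqrt{8}+16=17+4\sqrt{2}<32$). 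Marginalizing over $J$ then yields $\mathbb{P}(p(X)<\epsilon)\le\sum_j\pi_j\,e^{-8d}=e^{-8d}$, as required.

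There is no serious obstacle here; the work is bookkeeping, and the one thing to verify carefully is that the constant $e^{16}$ in the hypothesis matches the exponent produced by the $\chi^2_d$ concentration bound. A mildly puzzling feature of the statement is that the separation hypothesis (\ref{eq::sep-condition}) plays no role in the argument above: the crude lower bound $p\ge\pi_j\phi_{\mu_j,\sigma}$ is valid regardless of how close the centers are. I would guess the separation is recorded here either because it is needed for downstream statements about the modes and basins of $p$, or because the authors' own proof proceeds via a Voronoi decomposition around the $\mu_j$'s (where separation is needed to keep $X$ inside its own cell with high probability) rather than by the universal lower bound I use above.
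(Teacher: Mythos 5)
Your proposal is correct and is essentially the paper's own argument: the paper likewise lower-bounds $p$ by a single mixture component on the balls $B_j=\{x:\ \|x-\mu_j\|\le \sigma c\}$ with $c^2=\min_j 2\log\bigl(\pi_j/(\epsilon\sigma^d(2\pi)^{d/2})\bigr)\ge 32d$, conditions on the latent label, and applies the Laurent--Massart $\chi^2_d$ tail bound. Your observation about the separation hypothesis is also accurate for the paper's proof: condition (\ref{eq::sep-condition}) is invoked only to assert that the $B_j$ are disjoint, a fact the subsequent chain of inequalities (which uses only the trivial inclusion $\bigcap_s B_s^c\subseteq B_j^c$) never actually exploits.
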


{\bf Remark:}
Given the condition on $\epsilon$,
we can re-write
(\ref{eq::sep-condition}) as
$$
\min_{j\neq k}||\mu_j - \mu_k|| > C' \sqrt{d}
$$
for a constant $C'>0$.

\begin{proof}
Let 
$$
c = \min_j \sqrt{2 \log \left(\frac{\pi_j}{\epsilon \sigma^d (2\pi)^{d/2}}\right)}
$$
and let
$B_j = \{ x:\ ||x-\mu_j||/\sigma \leq c\}$, $j=1,\ldots, k$.
The sets
$B_1,\ldots, B_k$ are disjoint
due to (\ref{eq::sep-condition}).

First we claim that
$$
p(x) < \epsilon\ \ \ \ {\rm implies\ that}\ \ \ \ x\in \Biggl(\bigcup_s B_s\Biggr)^c = \bigcap_s B_s^c.
$$
To see this, let
$x\in B_j$ for some $j$.
Then, from the definition of $B_j$ and $c$,
$$
p(x) = 
\sum_{s=1}^k  \frac{\pi_s}{\sigma^d (2\pi)^{d/2}}e^{- ||X-\mu_s||^2/(2\sigma^2)}
 \geq
\frac{\pi_j}{\sigma^d (2\pi)^{d/2}}e^{- ||X-\mu_j||^2/(2\sigma^2)}
 \geq
\frac{\pi_j}{\sigma^d (2\pi)^{d/2}}e^{- c^2/2} \geq \epsilon.
$$
That is,
$x\in B_j$ for some $j$ implies
$p(x) \geq \epsilon$ and so the claim follows.

Let $Y\in\{1,\ldots, k\}$
where $P(Y=j) = \pi_j$.
We can write
$X = \sum_j I(Y=j) X_j$
where $X_j \sim N(\mu_j,\sigma^2 I)$.
Of course, $X\stackrel{d}{=} X_j$ when $Y=j$.
Note that
$||X_j-\mu_j||^2/\sigma^2 \sim \chi^2_d$.
Hence,
\begin{align*}
P( p(X) < \epsilon) & \leq
P\left(X \in \bigcap_s B_s^c\right) =
\sum_j \pi_j P\left(X \in \bigcap_s B_s^c\,\Biggm|\, Y=j\right)\\
&= 
\sum_j \pi_j P(X_j \in \bigcap_s B_s^c) \leq
\sum_j \pi_j P(X_j \in B_j^c)\\
&= \sum_j \pi_j P\left( \frac{||X_j - \mu_j||}{\sigma} > c\right)\\
&= \sum_j \pi_j P\left( \chi_d^2 > c^2\right)=
P\left( \chi_d^2 > c^2\right).
\end{align*}
From Lauren and Massart (2000), 
(see also Lemma 11 of Obizinski et al)
when $t\geq 2d$,
$$
P\left( \chi_d^2 > t\right) \leq
\exp\left( - \frac{t}{2} \left(1 - 2 \sqrt{\frac{2d}{t}}\right)\right) 
$$
The last quantity is bounded above by
$e^{-t/4}$ when
$t\geq 32d$.
By the condition on $\epsilon$,
$c^2 \geq 32d$.
Hence
$$
P(p(X) < \epsilon) \leq P\left( \chi^2 > c^2\right) \leq
e^{-c^2/4} \leq e^{-8d}.
$$
\end{proof}

\begin{theorem}
Let
$X_1,\ldots, X_n \sim \sum_{j=1}^k \pi_j N(\mu_j,\sigma^2 I)$.
Let $\hat p_h$
be the kernel density estimator with
fixed bandwidth $h>0$ satisfying
$$
0 < h < \frac{1}{2}  \min_j 
\left( \frac{\pi_j^d}{\sqrt{2\pi} \sigma e^{16}}\right)^d.
$$
Let $D = \bigcup_j \partial{\cal C}_j$ and define
$\Gamma = \min_j  d(\mu_j,D)$.
Suppose that $p$ is Morse,
$$
\Gamma > \sigma \sqrt{32 d + 2 \log\left(\frac{1}{\min_j \pi_j}\right)}
$$
and that
\begin{equation}
\min_{j\neq k}||\mu_j - \mu_k|| >
2\sigma \max_j 
\sqrt{2d \log\left(\frac{1}{\sigma \sqrt{2\pi}}\right) + 
\log\left(\frac{1}{\epsilon}\right)-
2\log\left(\frac{1}{\pi_j}\right)}.
\end{equation}
Then, for all large $n$,
$$
\mathbb{P}\Biggl(\hat c(X_j,X_k) \neq c(X_j,X_k) \ {\rm for\ some\ }j,k\Biggr) \leq e^{-8d} + e^{-nb}.
$$
\end{theorem}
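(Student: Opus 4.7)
The overall strategy is to invoke the Corollary from Section 4.4, which bounds the clustering risk by
$$\mathbb{P}\bigl(p(X) < \xi + C\eta\bigr) + e^{-nb},$$
and then apply the preceding Gaussian-mixture lemma to dominate the first term by $e^{-8d}$, after verifying that $\xi + C\eta$ lies below the threshold $\min_j (\pi_j^{1/d}/(\sqrt{2\pi}\sigma e^{16}))^d$ required there. The mean-separation hypothesis in the statement is tailored to match the separation assumption of that lemma with $\epsilon$ roughly equal to $\xi + C\eta$, so the only real work is to control $\xi$ and $\eta$ and then collect estimates.

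The first step is to bound $\xi = \sup_{x\in D} p(x)$ by means of the separation hypothesis on $\Gamma$. Since $\Gamma = \min_j d(\mu_j,D)$, every boundary point $x \in D$ satisfies $\|x - \mu_j\| \geq \Gamma$ for every $j$, so the explicit Gaussian form of $p$ gives
$$\xi \;\leq\; \sum_j \frac{\pi_j}{(2\pi\sigma^2)^{d/2}} e^{-\Gamma^2/(2\sigma^2)} \;=\; \frac{1}{(2\pi\sigma^2)^{d/2}}\, e^{-\Gamma^2/(2\sigma^2)}.$$
Unwinding $\Gamma^2 > \sigma^2\bigl(32d + 2\log(1/\min_j\pi_j)\bigr)$ shows that $\xi$ already sits below the lemma's threshold, with $\min_j \pi_j$ to spare.

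The second step is to argue that $C\eta$ does not eliminate this slack. Because a Gaussian kernel convolved with a Gaussian mixture is again a Gaussian mixture, one has the exact identity $p_h = \sum_j \pi_j\,N(\mu_j,(\sigma^2+h^2)I)$. This lets us bound the bias terms $\sup_x|p_h-p|$, $\sup_x\|\nabla p_h-\nabla p\|$, and $\sup_x\|\nabla^2 p_h-\nabla^2 p\|$ directly in terms of $h$. The bandwidth restriction in the hypothesis was chosen precisely so that the resulting bias contribution to $\eta$ is much smaller than the slack produced in the previous step. The stochastic parts of $\eta_0,\eta_1,\eta_2$ are then controlled by Lemma \ref{lemma::kernel}: for fixed $h$ they are $O_P(\sqrt{\log n/n})$ and each fails the required smallness bound with probability at most $e^{-nb}$ for some $b>0$.

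Combining the two steps, for all large $n$ the event $\xi + C\eta \leq \min_j(\pi_j^{1/d}/(\sqrt{2\pi}\sigma e^{16}))^d$ holds with failure probability at most $e^{-nb}$, while the separation hypothesis is assumed outright. The preceding lemma then yields $\mathbb{P}\bigl(p(X)<\xi+C\eta\bigr)\leq e^{-8d}$, and plugging back into the corollary gives the claimed $e^{-8d}+e^{-nb}$ bound. The main obstacle is the calibration in step two: since $h$ is held fixed, the kernel bias does not vanish with $n$, so one must lean on the explicit Gaussian-convolution formula for $p_h$ to get bias bounds that survive the extremely small bandwidth allowed by the hypothesis and that leave enough room to absorb the bias into the boundary-density budget already carved out by the $\Gamma$ bound.
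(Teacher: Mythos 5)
Your proposal follows essentially the same route as the paper's own (very terse) proof: invoke Corollary 5 to reduce to bounding $\mathbb{P}(p(X)<\xi+C\eta)$, use the $\Gamma$ hypothesis to push $\xi$ below the threshold of the Gaussian-mixture lemma, absorb the fixed-bandwidth bias $\eta\approx ch$ into the remaining slack, and handle the stochastic part of $\eta$ with Lemma \ref{lemma::kernel} at cost $e^{-nb}$. Your version is more explicit about the $e^{-\Gamma^2/(2\sigma^2)}$ tail computation and the Gaussian-convolution identity for $p_h$, but the decomposition and the key lemmas are identical, so there is nothing substantively different to report.
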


\begin{proof}
By Corollary 5,
the cluster risk is bounded by
$P(p(X)  < \xi + C \eta) + e^{-nb}$.
With a fixed bandwidth not tending to 0,
the bias dominates for all large $n$, and so
$\eta < ch$ for some $c>0$,
except on a set of exponentially small probability.
The condition on $\Gamma$ implies that
$$
\xi = \sup_{x\in D}p(x) \leq 
\min_j \left( \frac{\pi_j^{1/d}}{\sqrt{2\pi} \sigma e^{16}}\right)^d.
$$
So $\epsilon \equiv \xi + C\eta = \xi + ch$ satisfies
(\ref{eq::eps-condition}).
By the previous lemma,
$P(p(X)  < \xi + C h)\leq e^{-8d}$.
\end{proof}

{\bf Remark:}
The theorem implies the following.
As long as the means are separated from each other and from the cluster boundaries
by at least $\sqrt{d}$,
then a kernel estimator 
has cluster risk
$e^{-8d} + e^{-nb}$.
It is not necessary to make the bandwidth tend to 0.

\subsection{Low Dimensional Analysis}

In this section we assume that the dimension $d$ is fixed.
In this case, it is possible to
use a different approach to bound the risk.
We do not make the low noise assumption.
The idea is to use results on the stability of dynamical systems
(Chapter 17 of Hirsch, Smale and Devaney 2004).
As before $p$ is a Morse function and $\tilde p$
is another function.
Define $\eta, \xi, C_g$ and ${\cal C}^\dagger(a)$ as in the previous sections.

Let ${\cal C}$ be a cluster with mode $m$.
Choose a number $a$ such that
\begin{equation}
0 < a < p(m) - A \eta C_g - \xi.
\end{equation}
For any $x$ in the interior of ${\cal C}$,
let
\begin{equation}
t(x) = \inf\Bigl\{ t:\ \pi_x(t)\in {\cal C}^\dagger(a)\Bigr\}.
\end{equation}
If $x\in\partial C$ then
$t(x) = \infty$ since $\pi_x(t)$ converges to a saddlepoint on the boundary.
But for any interior point, $t(x) < \infty$.
For $x\in {\cal C}^\dagger(a)$ we define
$t(x) = 0$.

Our first goal is to control the difference
$||\tilde\pi_x(t(x))- \pi_x(t(x))||$.
And to do this, we first need to bound $t(x)$.
Let
\begin{equation}
\Delta(x) = \inf_{0 \leq t \leq t(x)}||g(\pi_x(t))||.
\end{equation}
Now, $\Delta(x) >0$ for each $x\notin\partial {\cal C}$.
However, as $x$ gets closer to the boundary, 
$\Delta(x)$ approaches 0.
We need an assumption about how fast 
$\Delta(x)$ approaches 0 
as $x$ approaches $\partial {\cal C}$
which is captured in the following assumption:

(B) 
Let ${\cal B}_\delta = \{ x\in {\cal C}:\ d(x,\partial {\cal C}) = \delta\}$.
There exists $\gamma>0$ such that,
for all small $\delta >0$,
\begin{equation}
x\in {\cal B}_\delta \ \ \ {\rm implies\ that\ }\ \ \ \Delta(x) \geq c \delta^\gamma.
\end{equation}

\begin{lemma}
Assume condition B.
If $d(x,\partial {\cal C}) \geq \delta$ then
$t(x) \leq p(m)/\delta^{2\gamma}$.
\end{lemma}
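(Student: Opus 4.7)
The plan is to use the fact that along a gradient flow, the density increases at rate equal to the squared gradient norm, and to combine this with condition (B) to produce a lower bound on the rate of ascent of $p(\pi_x(t))$, which then caps how long the flow can take before reaching the core.

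First I would dispose of the trivial case $x \in \mathcal{C}^\dagger(a)$, where $t(x) = 0$ by definition. Otherwise $x$ lies in the interior of $\mathcal{C} \setminus \mathcal{C}^\dagger(a)$, so there is a unique $\delta' \geq \delta$ with $x \in \mathcal{B}_{\delta'}$. Condition (B), applied at this level, yields $\Delta(x) \geq c(\delta')^\gamma \geq c \delta^\gamma$, where I use $\gamma > 0$ and $\delta' \geq \delta$ in the last inequality. Consequently, for every $t \in [0, t(x)]$,
\begin{equation*}
\|g(\pi_x(t))\| \geq \Delta(x) \geq c\, \delta^\gamma.
\end{equation*}

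Next I would exploit the gradient flow identity. Since $\pi_x'(t) = g(\pi_x(t))$, the chain rule gives
\begin{equation*}
\frac{d}{dt} p(\pi_x(t)) = g(\pi_x(t))^T \pi_x'(t) = \|g(\pi_x(t))\|^2 \geq c^2 \delta^{2\gamma}
\end{equation*}
throughout $[0, t(x)]$. Integrating from $0$ to $t(x)$ gives
\begin{equation*}
p(\pi_x(t(x))) - p(x) \geq c^2 \delta^{2\gamma}\, t(x).
\end{equation*}

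Finally I would bound the left-hand side. Because $\pi_x(t(x)) \in \mathcal{C}$, its value is at most $p(m)$ (the mode being the max on the cluster), and $p(x) \geq 0$, so $p(\pi_x(t(x))) - p(x) \leq p(m)$. Rearranging yields $t(x) \leq p(m)/(c^2 \delta^{2\gamma})$, which is the stated bound up to the absorbed constant. No real obstacle arises; the only subtle point is passing from ``$d(x,\partial \mathcal{C}) \geq \delta$'' to ``$\Delta(x) \geq c\delta^\gamma$'' using the monotonicity $(\delta')^\gamma \geq \delta^\gamma$ afforded by $\gamma > 0$, after which everything is integration of the standard $\dot p = \|g\|^2$ identity along the flow.
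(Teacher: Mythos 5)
Your proof is correct and follows essentially the same route as the paper's: integrate the identity $\frac{d}{dt}p(\pi_x(t)) = \|g(\pi_x(t))\|^2$ over $[0,t(x)]$, lower-bound the integrand via $\Delta(x)$ and condition (B), and upper-bound the total increase by $p(m)$. You are in fact slightly more careful than the paper, which silently absorbs the constant $c$ from condition (B) (writing $t(x)\Delta^2(x) = t(x)\delta^{2\gamma}$ where an inequality with $c^2$ is what the hypothesis actually gives); your explicit handling of $c^2$ and of the monotonicity step $\delta' \geq \delta \Rightarrow (\delta')^\gamma \geq \delta^\gamma$ is the tidier version of the same argument.
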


\begin{proof}
Let $z = \pi_{x}(t(x))$ and $x(s) = \pi_x(s)$.
Then, 
\begin{align*}
p(m) \geq p(z) - p(x) &=
\int_0^{t(x)} \frac{\partial p(x(s))}{\partial s} ds =
\int_0^{t(x)} g(x(s))^T \pi_x'(s) ds \\
&= 
\int_0^{t(x)} ||g(x(s))||^2 ds \geq  t(x)\Delta^2(x) = t(x) \delta^{2\gamma}.
\end{align*}
\end{proof}

Now we need the following result which
is Lemma 6 of Arias-Castro et al (2013) adapted from Section 17.5 of 
Hirsch, Smale and Devaney (2004)

\begin{lemma}
Let $\eta_1 = \sup_x ||\nabla p(x) - \nabla \tilde p(x)||$.
For all $t\geq 0$,
\begin{equation}
||\tilde \pi_x(t) - \pi_x(t)|| \leq
\frac{\eta_1}{\kappa_2 \sqrt{d}} e^{\kappa_2 \sqrt{d}\, t}
\end{equation}
where $\kappa_2 = \sup_x||\nabla^2 p(x)||$.
\end{lemma}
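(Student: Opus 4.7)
The bound is a classical Gronwall-type comparison between two flows starting from the same initial condition, so my plan is to turn the difference of trajectories into a scalar ODE inequality and integrate.

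First I would set $u(t) = \tilde\pi_x(t) - \pi_x(t)$, so that $u(0)=0$. The flow equation $\pi_x'(t)=\nabla p(\pi_x(t))$ and its analogue for $\tilde p$ give
\begin{equation*}
u'(t) = \nabla\tilde p(\tilde\pi_x(t)) - \nabla p(\pi_x(t)).
\end{equation*}
Adding and subtracting $\nabla p(\tilde\pi_x(t))$ and applying the triangle inequality splits the right-hand side into a ``perturbation'' piece, controlled uniformly by $\eta_1$ by the very definition of $\eta_1$, and a ``Lipschitz'' piece, which I would bound by $L\|u(t)\|$ where $L$ is any upper bound on the Lipschitz constant of the vector field $\nabla p$ on $\mathbb{R}^d$. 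Since $\kappa_2=\sup_x\|\nabla^2 p(x)\|$ with $\nabla^2 p$ interpreted as the vectorised Hessian, the standard inequality between the Frobenius and operator norms on $d\times d$ matrices yields $L\le \kappa_2\sqrt{d}$, producing the differential inequality
\begin{equation*}
\|u'(t)\| \le \eta_1 + \kappa_2\sqrt{d}\,\|u(t)\|.
\end{equation*}

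I would then close the argument by integrating the scalar Gronwall inequality for $v(t)=\|u(t)\|$: multiplying $v'(t)\le \eta_1 + \kappa_2\sqrt{d}\,v(t)$ through by the integrating factor $e^{-\kappa_2\sqrt{d}\,t}$ and integrating from $0$ to $t$ (using $v(0)=0$) yields
\begin{equation*}
v(t) \le \frac{\eta_1}{\kappa_2\sqrt{d}}\bigl(e^{\kappa_2\sqrt{d}\,t}-1\bigr) \le \frac{\eta_1}{\kappa_2\sqrt{d}}\,e^{\kappa_2\sqrt{d}\,t},
\end{equation*}
which is the stated bound. The only delicate part is the norm bookkeeping that produces the factor $\sqrt{d}$ in front of $\kappa_2$; once the correct Lipschitz constant for $\nabla p$ is identified from $\kappa_2$, the remainder is exactly the textbook continuous-dependence-on-initial-conditions argument in Section 17.5 of Hirsch--Smale--Devaney that the authors cite, and which Arias-Castro et al.\ package as their Lemma 6. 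No new ideas beyond Gronwall's lemma and the triangle inequality are needed.
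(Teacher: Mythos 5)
The paper does not prove this lemma at all---it is imported verbatim as Lemma 6 of Arias-Castro et al.\ (2013), itself adapted from Section 17.5 of Hirsch--Smale--Devaney---so there is no in-paper argument to compare against; your Gronwall proof is precisely the standard derivation behind the cited result, and it is correct: the decomposition of $u'(t)$ into a perturbation term bounded by $\eta_1$ and a Lipschitz term, followed by the integrating-factor computation with $u(0)=0$, gives $\|u(t)\|\leq \frac{\eta_1}{K}(e^{Kt}-1)$ for any valid Lipschitz bound $K$ on $\nabla p$. One small bookkeeping remark: the norm inequality you invoke runs the other way ($\|A\|_{\mathrm{op}}\leq\|A\|_{F}$ for a $d\times d$ matrix), so whether $\kappa_2$ is the operator-norm or Frobenius-norm supremum of the Hessian you in fact get the sharper Lipschitz constant $L\leq\kappa_2$ with no $\sqrt{d}$; this is harmless, since $L\mapsto \eta_1(e^{Lt}-1)/L$ is increasing in $L$, so inflating $L$ to $\kappa_2\sqrt{d}$ (the constant carried by Arias-Castro et al., whose $\sqrt{d}$ stems from their particular norm convention for $\nabla^2 p$) still yields the stated inequality.
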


We now have the following result.

\begin{theorem}
Let
\begin{equation}\label{eq::thisisdelta}
\delta =
\left(\frac{\kappa_2 \sqrt{d}p(m)}{\log(\kappa_2 \sqrt{d}/\sqrt{\eta_1})}\right)^{\frac{1}{2\gamma}}.
\end{equation}
Let $x,y\in {\cal C}$.
Suppose that
$d(x,\partial {\cal C}) \geq \delta$ and
$d(y,\partial {\cal C}) \geq \delta$.
Also, suppose that $\eta_1 < a^2/C_g$.
Then,
for all small $\eta$,
$\tilde{\rm dest}(x) = \tilde{\rm dest}(y)$.
\end{theorem}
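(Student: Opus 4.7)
The plan is to chain together the two preceding lemmas and then hand off to Theorem~\ref{thm::core-stuff} to finish. The whole point of the carefully tuned $\delta$ in (\ref{eq::thisisdelta}) is to make an explicit bound on $\|\tilde\pi_x(t(x))-\pi_x(t(x))\|$ come out to $\sqrt{\eta_1}$; once that is in hand the theorem becomes a matter of showing the perturbed flow has landed inside a (slightly shrunk) cluster core of $p$ and then invoking the core stability result.

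First, since $d(x,\partial{\cal C})\geq\delta$, the preceding lemma gives $t(x)\leq p(m)/\delta^{2\gamma}$. Plugging into the Hirsch--Smale--Devaney lemma,
\begin{equation*}
\|\tilde\pi_x(t(x))-\pi_x(t(x))\|\leq \frac{\eta_1}{\kappa_2\sqrt{d}}\exp\!\left(\kappa_2\sqrt{d}\,\frac{p(m)}{\delta^{2\gamma}}\right).
\end{equation*}
With the choice (\ref{eq::thisisdelta}), $\kappa_2\sqrt{d}\,p(m)/\delta^{2\gamma}=\log(\kappa_2\sqrt{d}/\sqrt{\eta_1})$, so the right-hand side simplifies to $\sqrt{\eta_1}$. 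Exactly the same estimate holds for $y$.

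Next I show that $\tilde\pi_x(t(x))$ has been deposited in a core of $p$ lying inside ${\cal C}$. By construction $\pi_x(t(x))\in{\cal C}^\dagger(a)$, so part~1 of Theorem~\ref{thm::core-stuff} gives $d(\pi_x(t(x)),\partial{\cal C})\geq a/C_g$. The hypothesis $\eta_1<a^2/C_g$ rewrites as $\sqrt{\eta_1}<a/C_g$, so the ball of radius $\sqrt{\eta_1}$ around $\pi_x(t(x))$ stays inside ${\cal C}$; hence $\tilde\pi_x(t(x))\in{\cal C}$. A one-line Taylor bound using $\sup\|g\|\leq C_g$ then gives
\begin{equation*}
p(\tilde\pi_x(t(x)))\geq p(\pi_x(t(x)))-C_g\sqrt{\eta_1}\geq \xi+a-C_g\sqrt{\eta_1}=:\xi+a',
\end{equation*}
with $a'>0$. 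Therefore $\tilde\pi_x(t(x))\in{\cal C}^\dagger(a')$, and likewise $\tilde\pi_y(t(y))\in{\cal C}^\dagger(a')$.

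Finally, for all small $\eta$, the hypothesis $p(m)>2A\eta C_g+2\eta_0+\xi$ of Theorem~\ref{thm::core-stuff} is satisfied with $a$ replaced by $a'$ (the perturbation terms are $o(1)$ while $a'$ is bounded away from $0$), so parts~5 and~6 of that theorem apply to the points $\tilde\pi_x(t(x)),\tilde\pi_y(t(y))\in{\cal C}^\dagger(a')$. Using the semigroup property of the flow, $\tilde{\rm dest}(x)=\tilde{\rm dest}(\tilde\pi_x(t(x)))=\tilde m$ and similarly $\tilde{\rm dest}(y)=\tilde m$, giving the conclusion.

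The only delicate step is the second one: one has to check that the perturbed flow is still in ${\cal C}$ rather than in a neighbouring basin, and that its $p$-value is strictly above $\xi$ so that Theorem~\ref{thm::core-stuff} can be re-applied with some positive core parameter $a'$. This is exactly what the assumption $\eta_1<a^2/C_g$ buys, and it is where the analysis would break without it; the rest is bookkeeping.
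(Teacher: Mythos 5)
Your proof is correct and follows essentially the same route as the paper: bound $t(x)$ by $p(m)/\delta^{2\gamma}$, feed this into the Hirsch--Smale--Devaney perturbation lemma so that the choice of $\delta$ makes the flow deviation exactly $\sqrt{\eta_1}$, show the perturbed flow lands in the shrunk core ${\cal C}^\dagger(a')$ with $a'=a-C_g\sqrt{\eta_1}$, and finish via Theorem~\ref{thm::core-stuff} and the semigroup property. The only quibble --- that $\eta_1<a^2/C_g$ gives $\sqrt{\eta_1}<a/\sqrt{C_g}$ rather than $a/C_g$, so positivity of $a'$ really needs $\eta_1<a^2/C_g^2$ --- is inherited from the paper's own statement and proof, not introduced by you.
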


\begin{proof}
We prove the theorem in the following steps:

1. If $x\in C^\dagger(a)$
and $\epsilon < a/C_g$ 
then $B(x,\epsilon)\subset C^\dagger(a')$
where $a' = a - \epsilon C_g >0$.

Proof: Let $y\in B(x,\epsilon)$.
Expanding $p(y)$ around $x$,
$p(y) \geq p(x) - ||y-x|| C_g \geq \xi+a - \epsilon C_g = \xi + a'$.

2. 
Let $t = t(x)$.
If $d(x,\partial {\cal C}) \geq \delta$ then
$\tilde \pi_x(t)\in C^\dagger(a')$
where $a' = a - \sqrt{\eta_1} C_g >0$.

Proof: By definition,
$\pi_x(t) \in C^\dagger(a)$.
From the previous Lemmas,
\begin{align*}
||\tilde \pi_x(t) - \pi_x(t)|| & \leq
\frac{\eta_1}{\kappa_2 \sqrt{d}} e^{\kappa_2 \sqrt{d}\ t}\\
& \leq
\frac{\eta_1}{\kappa_2 \sqrt{d}} e^{\kappa_2 \sqrt{d}p(m) \delta^{-2\gamma}} \leq \sqrt{\eta_1}.
\end{align*}
The last inequality follows from the definition of $\delta$.
Hence,
$\tilde \pi_x(t)\in B(\pi_x(t),\sqrt{\eta_1})$.
It follows from part 1 that
$\tilde \pi_x(t)\in C^\dagger(a')$.
The fact that $a' >0$ follows from the fact that
$\eta_1 < a^2/C_g$.

3. From Theorem 3,
$\tilde p$ has a mode $\tilde m$ in
$C^\dagger(a')$ and has no other critical points in 
$C^\dagger(a')$.
So, letting $z = \tilde \pi_x(t)$,
and noting that $z$ is on the path $\tilde \pi_x(t)$,
$$
\tilde {\rm dest}(x) = \lim_{s\to \infty} \tilde \pi_x(s) = 
\lim_{s\to \infty} \tilde \pi_z(s) = \tilde m.
$$

Applying steps 1, 2 and 3 to $y$ we have that
$\tilde {\rm dest}(y)$ also equals $\tilde m$.
\end{proof}

Now let $\hat p_h$ be the kernel density estimator with $h=h_n \asymp n^{-1/(5+d)}$.
In this case $\eta_1 = O_P(n^{-2/(6+d)})$ so,
with $\delta$ defined as in (\ref{eq::thisisdelta}), we have
$$
\delta \equiv \delta_n \asymp (\log n)^{-1/(2\gamma)}.
$$
By the previous theorem,
there are no clustering errors for
data points $X_i$ such that $d(X_i, D) \succeq \delta_n$
where $D = \bigcup_j \partial {\cal C}_j$
as long as
$\eta_1 = \sup_x ||\nabla p(x) - \hat\nabla p(x)|| < a^2/C_g$
which holds except on a set of exponentially small probability (Lemma 1).
Hence, we have:

\begin{corollary}
Assume that $p$ is a Morse function with
finitely many critical values.
Denote the modes and clusters by
$m_1,\ldots, m_k$ and
${\cal C}_1,\ldots, {\cal C}_k$.
Suppose that condition (B) holds in each cluster.
Let $\hat p_h$
be the kernel density estimator.
Let $\eta = \max\{\eta_0,\eta_1,\eta_2\}$
where
$$
\eta_0 = \sup_x|\hat p_h(x) - p(x)|,\ 
\eta_1 = \sup_x|| \nabla \hat p_h(x) - \nabla p(x)||,\ 
\eta_2 = \sup_x|| \nabla^2 \hat p_h(x) - \nabla^2 p(x)||.
$$
Let $D = \bigcup_j \partial {\cal C}_j$ and let
$$
{\cal X} = \{ X_i:\ d(X_i,D) \geq \delta_n\}
$$
where
$$
\delta_n =
\left(\frac{\kappa_2 \sqrt{d}p(m)}{\log(\kappa_2 \sqrt{d}/\sqrt{\eta_1})}\right)^{\frac{1}{2\gamma}} \asymp
 (\log n)^{-1/(2\gamma)}.
$$
If $h_n\to 0$ and $n h_n^{d+4} \to \infty$, then
\begin{equation}
\mathbb{P}\Biggl( \hat c(X_i,X_j) \neq c(X_i,X_j)\ \ 
{\rm for\ any\ }\ X_i,X_j \in {\cal X}\Biggr) \leq e^{-n b}
\end{equation}
for some $b>0$.
\end{corollary}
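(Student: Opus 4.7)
The corollary is essentially a global statement obtained by gluing the previous theorem across all clusters and controlling the random quantities $\eta_0,\eta_1,\eta_2$ by Lemma \ref{lemma::kernel}. The plan is as follows. First, condition on the high-probability event $E$ that $\eta<\kappa(p)$ and $\eta_1<a^2/C_g$, where $a$ is the constant from the previous theorem applied (with its constants taken to be the minimum over the finitely many clusters). On $E$, Lemma \ref{lemma::morse} applies, so $\hat p_h$ is Morse and its $k$ critical points are in bijection with those of $p$ via the $A\eta$-close labeling; in particular, each mode $m_j$ of $p$ has a unique corresponding mode $\tilde m_j$ of $\hat p_h$, and distinct $m_j$'s produce distinct $\tilde m_j$'s as soon as $\eta$ is small enough that $A\eta$ is less than half the minimum separation between the modes of $p$.

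Now fix two points $X_i,X_j\in{\cal X}$, so $d(X_i,D)\ge\delta_n$ and $d(X_j,D)\ge\delta_n$. There are two cases. If $X_i$ and $X_j$ belong to the same true cluster ${\cal C}_r$, then $c(X_i,X_j)=1$, and since both points lie at distance at least $\delta_n$ from $\partial{\cal C}_r\subset D$, the preceding theorem (applied to ${\cal C}_r$ with $\tilde p=\hat p_h$, using the definition of $\delta_n$ in terms of $\eta_1$) yields $\hat{\rm dest}(X_i)=\hat{\rm dest}(X_j)=\tilde m_r$, so $\hat c(X_i,X_j)=1$. If instead $X_i\in{\cal C}_r$ and $X_j\in{\cal C}_s$ with $r\neq s$, then $c(X_i,X_j)=0$, and the same theorem applied separately inside ${\cal C}_r$ and ${\cal C}_s$ gives $\hat{\rm dest}(X_i)=\tilde m_r$ while $\hat{\rm dest}(X_j)=\tilde m_s$. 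By the bijection established in the first paragraph, $\tilde m_r\neq \tilde m_s$, so $\hat c(X_i,X_j)=0$. In either case $\hat c(X_i,X_j)=c(X_i,X_j)$.

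It remains to bound $\mathbb{P}(E^c)$. Under the hypotheses $h_n\to 0$ and $nh_n^{d+4}\to\infty$, Lemma \ref{lemma::kernel} immediately gives $\mathbb{P}(\eta>q)\le e^{-nb}$ for any fixed $q>0$ and some $b>0$, by exactly the telescoping argument used in the proof of Theorem \ref{thm::core-risk} (part 2): the bias terms $c_j h_n^{j}$ vanish for large $n$, so each of the three variance tail bounds contributes an exponentially small piece. Choosing $q$ smaller than both $\kappa(p)$ and $a^2/C_g$ makes $\mathbb{P}(E^c)\le e^{-nb}$, which yields the claimed bound. The only step requiring a little care is verifying the asymptotic $\delta_n\asymp(\log n)^{-1/(2\gamma)}$; this follows by substituting the rate $\eta_1=O_P(h_n^2+(nh_n^{d+2})^{-1/2})$ (which is a polynomial in $n$) into the displayed formula for $\delta_n$, since $\log(\kappa_2\sqrt d/\sqrt{\eta_1})$ grows like $\log n$ and the $p(m)$, $\kappa_2$, $d$ factors are constants. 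The main obstacle is mostly bookkeeping: making the constants $a$ and $\delta$ from the single-cluster theorem uniform over the finitely many clusters, which is harmless because $p$ has only $k<\infty$ modes.
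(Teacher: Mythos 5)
Your proposal follows essentially the same route as the paper, which derives the corollary in the short paragraph preceding it: apply the preceding single-cluster theorem to points at distance $\geq\delta_n$ from $D$, and absorb the event $\eta_1\geq a^2/C_g$ (and $\eta\geq\kappa(p)$) into an exponentially small probability via Lemma~\ref{lemma::kernel}. You actually supply details the paper leaves implicit --- the cross-cluster case via the mode correspondence from Lemma~\ref{lemma::morse} and the uniformization of constants over the finitely many clusters --- and your one caveat (that $\delta_n\asymp(\log n)^{-1/(2\gamma)}$ really needs $\eta_1$ polynomially small, i.e.\ the specific choice $h_n\asymp n^{-1/(5+d)}$ used in the paper's preceding paragraph rather than the weaker stated conditions) is a fair observation about the statement, not a gap in your argument.
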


Thus, the clustering risk is exponentially small
if we exclude points that are close to the boundary.

\section{Experiments}

An example of highly non-spherical mode clusters in two dimensions is given in Figure~\ref{fig::experiment}, left panel.
The true density (contours shown in blue) has two modes, with the corresponding basins of attraction shown in blue and green.
Mean shift (using a Gaussian kernel with bandwidth $1$) is applied to the $1000$ points sampled from the density as plotted, and all but the points shown in red are correctly clustered.
All but $1\%$ of points are correctly clustered, despite a total variation distance of about $0.29$ between the true and estimated densities.

Our theoretical results show that mean shift clustering should perform well even in high dimensions, assuming the bulk of the basins of attraction are well-separated by low density regions.
We simulate such a setting in $10$ dimensions, were we measure the performance of mean shift clustering on samples drawn from a mixture of two equal weight Gaussian components.
The norm of the difference between the means is $5$, and each component has randomly generated non-spherical covariance matrix with eigenvalues between $0.5$ and $2$.
The center panel of Figure~\ref{fig::experiment} shows the average clustering error as a function of the sample size $n$ and bandwidth $h$, after $75$ replications of the procedure.
With only $50$ samples, an average error of $0.05$ is achieved with the appropriate bandwidth.

The effect of component separation is demonstrated further in the right panel of Figure~\ref{fig::experiment}.
Here, we draw $n=300$ samples from an equal weight mixture of 
two unit covariance Gaussians in 
two dimensions, and measure the clustering error of mean shift (averaged over $35$ replications).

\begin{figure}
\begin{center}
\includegraphics[width=0.3\textwidth]{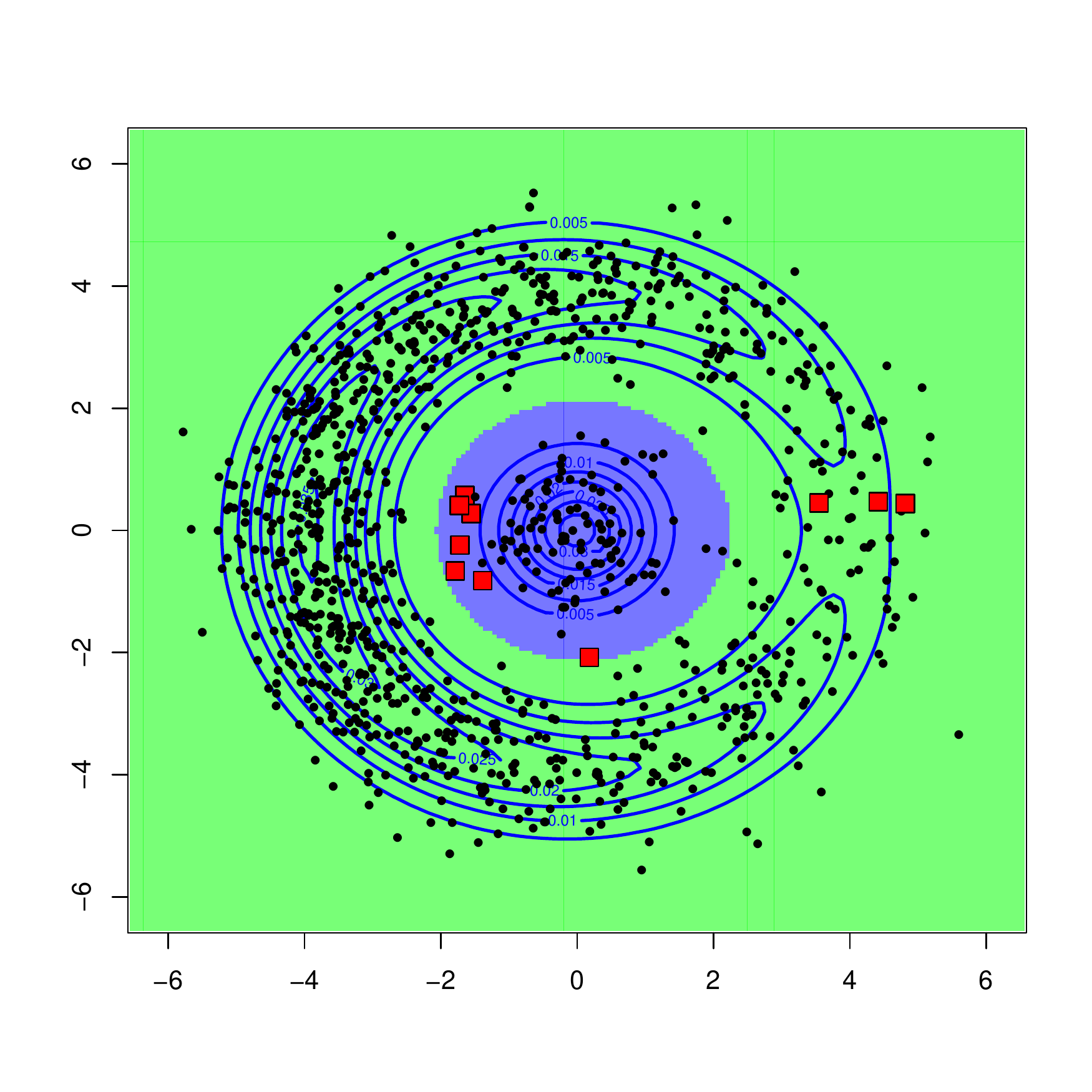} ~
\includegraphics[width=0.3\textwidth]{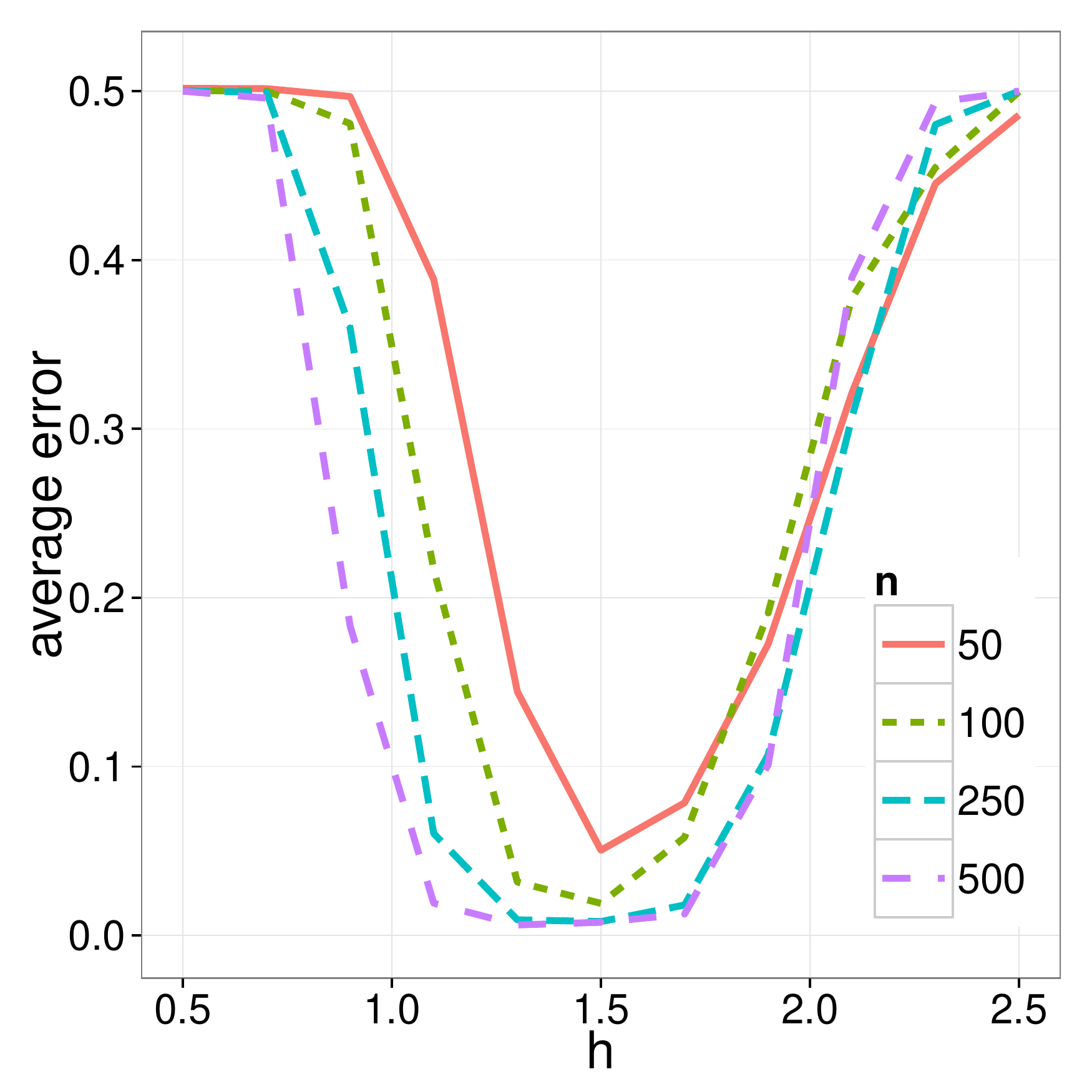} ~
\includegraphics[width=0.3\textwidth]{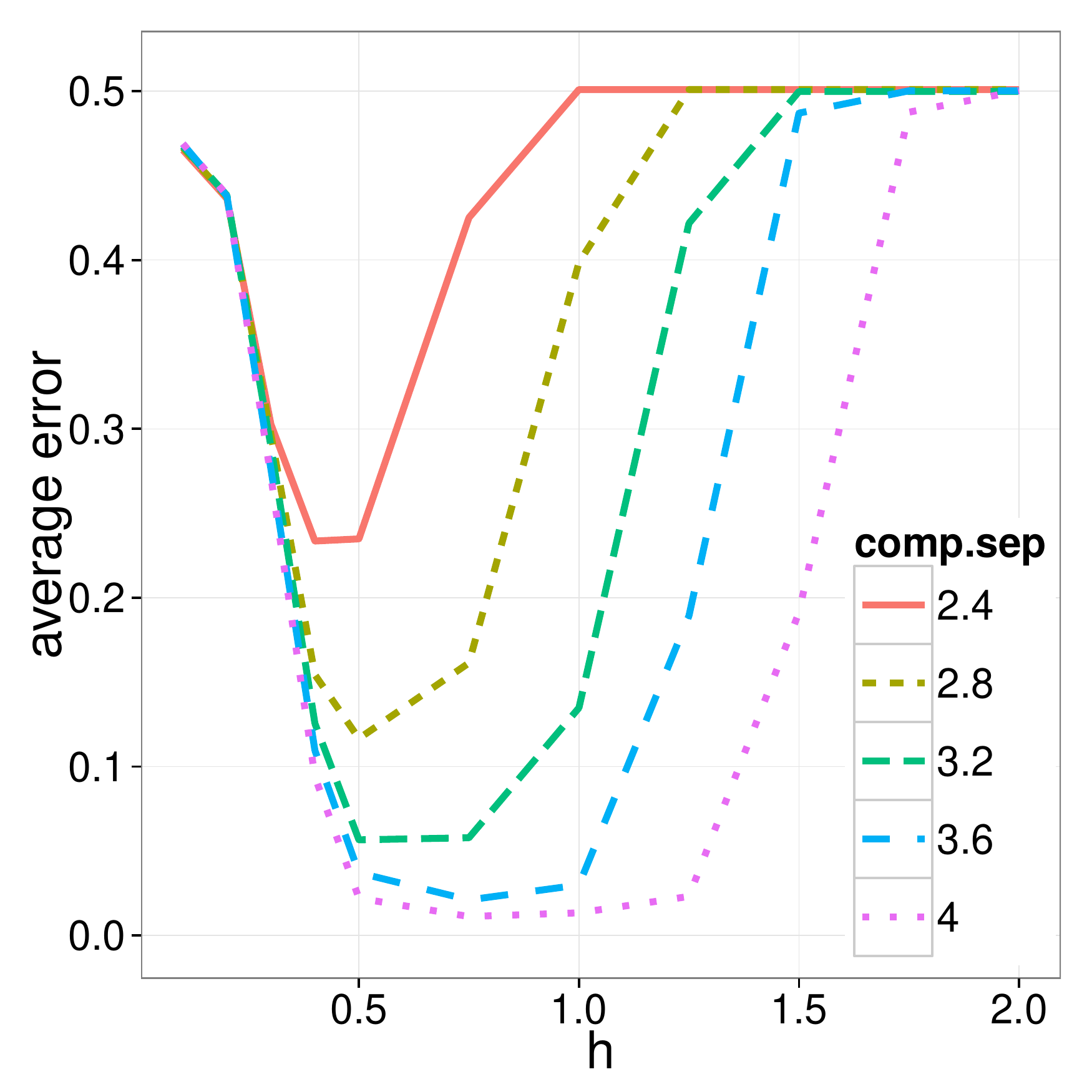} 
\end{center}
\caption{\em Left: example of highly non-convex basins of attraction.
Center: small sample complexity in high dimensions due to well-separated clusters.
Right: effect of cluster separation, ranging from nearly unimodal to having two well-separated modes.}
\label{fig::experiment}
\end{figure}

\section{Conclusion}

Density mode clustering --- also called mean-shift clustering ---
is very popular in certain fields such as computer vision.
In statistics and machine learning it is
much less well known.
This is too bad because it is a simple, nonparametric and very general
clustering method.
And as we have seen, it is not necessary to estimate the density well
to get a small clustering risk.
Because of this, mode clustering can be effective even in high dimensions.

We have developed a bound on the pairwise risk
of density mode clustering.
The risk within the cluster cores --- the high density regions ---
is very small with virtually no assumptions.
If the clusters are well-separated (low noise condition)
then the overall risk is small,
even in high dimensions.

Several open questions remain such as:
how to estimate the risk, how to choose a good bandwidth
and what to do when the low noise condition fails.
Regarding the last point,
we believe it should be possible to
identify regions where the low noise conditions fail.
These are essentially parts of the cluster boundaries with non-trivial mass.
In that case, there are two ways to reduce the risk.
One is to merge poorly separated clusters.
Another is to allow ambiguous points to be assigned to more than one cluster.
For research in this direction, see
\cite{li2007nonparametric, Chen2014GMRE}.

\acks{We would like to acknowledge support for this project
from the National Science Foundation (NSF grant XXXX) and XXXX.}

\bibliography{paper.bib}

\begin{thebibliography}{20}
\providecommand{\natexlab}[1]{#1}
\providecommand{\url}[1]{\texttt{#1}}
\expandafter\ifx\csname urlstyle\endcsname\relax
  \providecommand{\doi}[1]{doi: #1}\else
  \providecommand{\doi}{doi: \begingroup \urlstyle{rm}\Url}\fi

\bibitem[Arias-Castro et~al.(2013)Arias-Castro, Mason, and
  Pelletier]{Arias-Castro2013}
Ery Arias-Castro, David Mason, and Bruno Pelletier.
\newblock On the estimation of the gradient lines of a density and the
  consistency of the mean-shift algoithm.
\newblock Technical report, IRMAR, 2013.

\bibitem[Audibert and Tsybakov(2007)]{Audibert2007}
Jean-Yves Audibert and Alexandre~B. Tsybakov.
\newblock Fast learning rates for plug-in classification.
\newblock \emph{The Annals of Statistics}, 2007.

\bibitem[Azzalini and Torelli(2007)]{Azzalini:2007la}
Adelchi Azzalini and Nicola Torelli.
\newblock Clustering via nonparametric density estimation.
\newblock \emph{Statistics and Computing}, 17\penalty0 (1):\penalty0 71--80,
  2007.
\newblock ISSN 0960-3174.
\newblock \doi{10.1007/s11222-006-9010-y}.

\bibitem[Banyaga and Hurtubise(2004)]{banyaga2004morse}
A~Banyaga and D~Hurtubise.
\newblock \emph{Morse homology}.
\newblock Springer, Dordrecht, 2004.

\bibitem[Chac{\'o}n and Duong(2013)]{Chacon2012}
J.E. Chac{\'o}n and Tarn Duong.
\newblock Data-driven density derivative estimation, with applications to
  nonparametric clustering and bump hunting.
\newblock \emph{Electronic Journal of Statistics}, 2013.

\bibitem[Chac{\'o}n et~al.(2011)Chac{\'o}n, Duong, and Wand]{Chacon2011}
J.E. Chac{\'o}n, T.~Duong, and M.P Wand.
\newblock Asymptotics for general multivariate kernel density derivative
  estimators.
\newblock \emph{Statistica Sinica}, 2011.

\bibitem[Chaudhuri and Dasgupta(2010)]{chaudhuri2010rates}
Kamalika Chaudhuri and Sanjoy Dasgupta.
\newblock Rates of convergence for the cluster tree.
\newblock In \emph{Advances in Neural Information Processing Systems}, pages
  343--351, 2010.

\bibitem[Chen et~al.(2014)Chen, Genovese, and Wasserman]{Chen2014GMRE}
Yen-Chi Chen, Christopher~R. Genovese, and Larry Wasserman.
\newblock Generalized mode and ridge estimation.
\newblock \emph{arXiv: 1406.1803}, 2014.

\bibitem[Cheng(1995)]{cheng1995mean}
Yizong Cheng.
\newblock Mean shift, mode seeking, and clustering.
\newblock \emph{Pattern Analysis and Machine Intelligence, IEEE Transactions
  on}, 17\penalty0 (8):\penalty0 790--799, 1995.

\bibitem[Comaniciu and Meer(2002)]{Comaniciu}
D.~Comaniciu and P.~Meer.
\newblock Mean shift: a robust approach toward feature space analysis.
\newblock \emph{Pattern Analysis and Machine Intelligence, IEEE Transactions
  on}, 24\penalty0 (5):\penalty0 603 --619, may 2002.

\bibitem[Edelsbrunner and Harer(2010)]{edelsbrunner2010computational}
Herbert Edelsbrunner and John Harer.
\newblock \emph{Computational topology: an introduction}.
\newblock American Mathematical Soc., 2010.

\bibitem[Fukunaga and Hostetler(1975)]{Fukunaga}
Keinosuke Fukunaga and Larry~D. Hostetler.
\newblock The estimation of the gradient of a density function, with
  applications in pattern recognition.
\newblock \emph{IEEE Transactions on Information Theory}, 21:\penalty0 32--40,
  1975.

\bibitem[Gine and Guillou(2002)]{Gine2002}
E.~Gine and A~Guillou.
\newblock Rates of strong uniform consistency for multivariate kernel density
  estimators.
\newblock \emph{In Annales de l'Institut Henri Poincare (B) Probability and
  Statistics}, 2002.

\bibitem[Hartigan(1975)]{Hartigan1975}
J.A. Hartigan.
\newblock \emph{Clustering Algorithms}.
\newblock Wiley and Sons, Hoboken, NJ, 1975.

\bibitem[Kent et~al.(2013)Kent, Rinaldo, and Verstynen]{kent2013debacl}
Brian~P Kent, Alessandro Rinaldo, and Timothy Verstynen.
\newblock Debacl: A python package for interactive density-based clustering.
\newblock \emph{arXiv preprint arXiv:1307.8136}, 2013.

\bibitem[Kpotufe and von Luxburg(2011)]{kpotufe2011pruning}
Samory Kpotufe and Ulrike von Luxburg.
\newblock Pruning nearest neighbor cluster trees.
\newblock \emph{arXiv preprint arXiv:1105.0540}, 2011.

\bibitem[Li et~al.(2007)Li, Ray, and Lindsay]{li2007nonparametric}
J.~Li, S.~Ray, and B.G. Lindsay.
\newblock A nonparametric statistical approach to clustering via mode
  identification.
\newblock \emph{Journal of Machine Learning Research}, 8\penalty0 (8):\penalty0
  1687--1723, 2007.

\bibitem[Matsumoto(2002)]{matsumoto2002introduction}
Yukio Matsumoto.
\newblock \emph{An introduction to Morse theory}, volume 208.
\newblock American Mathematical Soc., 2002.

\bibitem[Milnor(1963)]{milnor1963morse}
John~Willard Milnor.
\newblock \emph{Morse theory}.
\newblock Number~51. Princeton university press, 1963.

\bibitem[Tan and Witten(2015)]{tan2015statistical}
Kean~Ming Tan and Daniela Witten.
\newblock Statistical properties of convex clustering.
\newblock \emph{arXiv preprint arXiv:1503.08340}, 2015.

\end{thebibliography}

\end{document}